\documentclass[12pt]{elsarticle}
\usepackage{cancel}
\usepackage{caption}
\usepackage{color}
\usepackage{lscape}
\usepackage{afterpage}
\usepackage{pstricks}
\usepackage{pst-plot}
\usepackage{longtable}
\usepackage{dcolumn}
\usepackage{pst-node}
\usepackage{amsmath}
\usepackage{amssymb}
\usepackage{amsthm}
\usepackage{multirow}
\usepackage{colortab}
\usepackage{color}
\usepackage{array}
\usepackage{slashbox}
\usepackage{colortbl}
\usepackage{subfigure}
\usepackage{textcomp}
\usepackage{pstricks, pst-node}
\usepackage{pst-all}
\usepackage{algorithm,algorithmic}
\usepackage{url}
\usepackage{soul}
\usepackage{hyperref}
\psset{arrows=->, labelsep=3pt, mnode=circle}

\usepackage{tabularx}

\usepackage{eurosym}

\newfont{\rams}{msbm10 scaled\magstep1}
\newcommand{\rea}{\mbox{\rams \symbol{'122}}}


\setlength{\textwidth}{18 cm} \setlength{\oddsidemargin}{-1cm}

\setlength{\evensidemargin}{-1cm}

\setlength{\headheight}{-3cm}
\setlength{\textheight}{25cm}
\newenvironment{resumeT}{\begin{list}{}{\setlength{\rightmargin}{\leftmargin}}\item[]
{\centering {\bf \it~~~}
\par}\item[]\ignorespaces}{\unskip\end{list}}


\newtheorem{prop}{Proposition}[section]

\newtheorem{theorem}{Theorem}[section]

\pagenumbering{roman}

\begin{document}
\title{As simple as possible but not simpler in Multiple Criteria Decision Aiding: the ROR-SMAA level dependent Choquet integral approach}

\author[uc]{Sally Giuseppe Arcidiacono}
\ead{s.arcidiacono@unict.it}
\author[uc]{Salvatore Corrente}
\ead{salvatore.corrente@unict.it}
\author[uc,por]{Salvatore Greco}
\ead{salgreco@unict.it}

\address[uc]{Department of Economics and Business, University of Catania, Corso Italia, 55, 95129  Catania, Italy}
\address[por]{Portsmouth Business School, Centre for Operational Research and Logistics (CORL), University of Portsmouth, Portsmouth, United Kingdom}

\date{}
\maketitle

\vspace{-1cm}

\begin{resumeT}
{\large {\bf Abstract:}}  
The level dependent Choquet integral has been proposed to handle decision making problems in which the importance and the interaction of criteria may depend on the level of the alternatives' evaluations. This integral is based on a level dependent capacity, which is a family of single capacities associated to each level of evaluation for the considered criteria. We present two possible formulations of the level dependent capacity where importance and interaction of criteria are constant inside each one of the subintervals in which the interval of evaluations for considered criteria is split or vary with continuity inside the whole interval of evaluations. Since, in general, there is not only one but many level dependent capacities compatible with the preference information provided by the Decision Maker, we propose to take into account all of them by using the Robust Ordinal Regression (ROR) and the Stochastic Multicriteria Acceptability Analysis (SMAA). On one hand, ROR defines a necessary preference relation (if an alternative $a$ is at least as good as an alternative $b$ for all compatible level dependent capacities), and a possible preference relation (if $a$ is at least as good as $b$ for at least one compatible level dependent capacity). On the other hand, considering a random sampling of compatible level dependent capacities, SMAA gives the probability that each alternative reaches a certain ranking position as well as the probability that an alternative is preferred to another. A real-world decision problem on rankings of universities is provided to illustrate the proposed methodology.


\vspace{0,3cm}
\noindent{\bf Keywords}: {Decision support systems, Level dependent Choquet integral, Robust Ordinal Regression, Stochastic Multicriteria Acceptability Analysis, Ranking of University}
\end{resumeT}

\pagenumbering{arabic}

\section{Introduction}\label{intro}
Decision support is generally based on formal mathematical models representing the preferences of the Decision Maker (DM). Since all formal mathematical models are a simplified representation of the phenomena one wants to investigate, a decision support model is a simplified representation of a process that the DM can adopt to take a sensible decision. The idea of simplicity of a model is often related to the Ockham's razor, usually expressed as ``\textit{pluralitas non est ponenda sine necessitate}" (plurality should not be posited without necessity) even if in his ``Summa Totius Logicae'',  William of Ockham wrote in a different way: ``\textit{Frustra fit per plura quod potest fieri per pauciora}" (it is futile to do with more things that which can be done with fewer) \cite{KnealeKneale1962,Thorburn1918}. The search of simplicity is, for sure, an important point of any scientific model. However it has to be tempered with the idea that the model should represent all the important elements that can explain the phenomenon under investigation. In this sense, there is another very well known principle that is generally attributed to Albert Einstein: ``\textit{Everything should be made as simple as possible, but not simpler}". In fact, the exact expression of Einstein is the following: ``\textit{It can scarcely be denied that the supreme goal of all theory is to make the irreducible basic elements as simple and as few as possible without having to surrender the adequate representation of a single datum of experience}" \cite{einstein1934method}. We believe that this principle has a specific importance for decision support models. They aim to provide a recommendation that has to be understandable and convincing for the DM and not an ideally abstract description of the decision processes as can be the case of the decision models considered in economics \cite{roy1993decision}. Indeed, in economics, following the considerations of Milton Friedman \cite{friedman1953essays},  a model has to be evaluated in terms of simplicity (that is minimality of initial knowledge needed) and fruitfulness (that is accuracy and generality of the obtained prediction). Instead, in decision support, a model has to be appreciated in terms of the interaction it permits with the DM and in terms of the convincing arguments it produces \cite{RoyRobustness}. With this aim, the focus shifts from the simplicity to the richness of the model that, within decision support, is seen as a feature that facilitates the interaction with the DM that can see the model as a language to express his preferences with the desired degree of detail. 

On the basis of these considerations, in this paper we propose  a quite ``rich'' decision support model that can be seen as a relevant application and an exemplification of the tempered parsimony we have discussed. Our model is constituted by the Robust Ordinal Regression (ROR; see \cite{greco2008ordinal} for the seminal paper and \cite{CGKSml,CGKSen} for two comprehensive  surveys) and Stochastic Multicriteria Acceptability Analysis (SMAA; \cite{Lahdelma} for the seminal paper and \cite{LahdelmaSalminenbook,PelissariEtAl2019} for two comprehensive  surveys) applied to the level dependent Choquet integral \cite{greco2011choquet}. The decision analysis model so obtained permits to take into consideration the three following relevant aspects of a multicriteria decision making problem:  
\begin{itemize}
	\item interaction between criteria, 
	\item importance and interaction of criteria changing from one level to another of the evaluations in the considered criteria,
	\item robustness concerns with respect to the value assigned to the parameters of the decision model.
\end{itemize}
	Let us discuss more in detail each one of the above points. Interaction between criteria refers to the phenomenon for which when aggregating their evaluations to obtain an overall assessment, criteria can reinforce or can weaken each other producing a synergy or a redundancy, respectively; following a very well known example \cite{Grabisch1996}, evaluating students of a technical school we can have: 
\begin{itemize}
\item a redundancy effect between Mathematics and Physics: students good in Mathematics are, in general, good also in Physics. Therefore, to avoid an over-evaluation of students good in both subjects, Mathematics and Physics considered together should have a weight smaller than the sum of their weights when considered separately;
\item a synergy effect between Mathematics and Literature: students good in Mathematics are, in general, not good in Literature. Therefore, a student presenting good marks in both subjects is well appreciated and a special bonus is considered in evaluating him. Consequently, Mathematics and Literature considered together should have a weight greater than the sum of their weights when considered separately.
\end{itemize}
We have to observe also that importance and interaction of criteria can change from one level to another of the evaluations in the considered criteria; again, in evaluating students in a technical school, we can have the following:
			
\begin{itemize}			
\item the importance of Literature can be greater than that one of Mathematics and Physics in evaluating students with rather low notes on the last two subjects: indeed, a relatively good note in Literature means that there is some potential for the student to improve despite the low notes in Mathematics and Physics,
\item the importance of Literature is not so great in evaluating students with medium notes in all subjects: in this case, the focus is more on the core subjects of the school that are Mathematics and Physics,
\item the importance of Literature is again very high in evaluating students with high notes in all subjects: in this case, one expects excellence and this is obtained if the students are good in all subjects and not only in Mathematics and Physics;
\item Mathematics and Physics can present a synergetic effect in evaluating students with low notes: indeed, in this case, a relatively good note in both subjects means that the student can improve;
\item Mathematics and Physics can present a redundant effect in evaluating students with medium notes: in this case, one relatively good evaluation in one subject is already considered enough;
\item Mathematics and Physics can present a synergetic effect in evaluating students with high notes in both subjects: in this case, good evaluations are necessary to be evaluated excellent.
\end{itemize}
			 			
Another important issue in any decision aiding procedure is the robustness of the final recommendation. More in detail, any formal decision model requires fixing the value of a certain number of parameters. Of course, perturbation of the values of these parameters can bring to different results. Consequently, to get a sufficiently reliable recommendation, it is necessary to explore the stability of the suggestions proposed by the adopted decision model changing the values of its parameters.

The decision model we are considering is able to take into account the above three issues in the way we are going to present in the following. 

On one hand, the level dependent Choquet integral permits to handle criteria for which the importance and the interaction change from one level to the other of the evaluations. Indeed, the Choquet integral permits to aggregate the evaluations taking into account interaction of criteria by using capacities that assign a weight to each subset of the considered criteria and not only to single criteria as in the usual weighted sum. The level dependent Choquet integral goes a further step forward taking into account a set of level dependent capacities, that is,  a capacity for each level of the evaluation criteria. This permits to represent different attitudes, that is, different importance and interaction assigned to considered criteria, for different evaluation levels (for an alternative decision model permitting to represent interaction of criteria that changes from one level to another of the evaluations see \cite{greco2014robust}). 

On the other hand, the robustness concerns are handled by an exploration of the space of the feasible parameters of the decision model, that are, the level dependent capacities compatible with the preference information supplied by the DM. This is done by applying:  
\begin{itemize}
\item ROR, that, for each pair of alternatives $a$ and $b$, tells us if $a$ is preferred to $b$ for all compatible level dependent capacities (in which case, we say that $a$ is \textit{necessarily} preferred to $b$) or for at least one compatible level dependent capacity (in which case we say that $a$ is \textit{possibly} preferred to $b$; 
\item SMAA, that, for each alternative $a$ and for each rank order position $r$,  tells us which is the probability that, randomly extracting a compatible level dependent capacity, $a$ attains the position $r$, as well as, for each pair of alternatives  $a$ and $b$, the probability that $a$ is preferred to $b$.  
\end{itemize}
Another interesting issue to be considered is the modeling of changes of importance and interaction of criteria from one level to another of the evaluations on considered criteria. In fact, two main approaches could be considered:
\begin{itemize}
	\item partitioning in intervals separated by reference levels range of possible evaluations on considered criteria, adopting preference parameters representing importance and interaction between criteria that change from one interval to another, but that remain constant within the same interval;
	\item using preference parameters representing importance and interaction between criteria that change continuously from one level of evaluations to another without jumps.
\end{itemize}
The first approach is clearly more manageable and could be considered a reasonable assumption to model preferences. In this way, the model can be refined by increasing the number of intervals of the partition and, therefore, representing better the changes of importance and interaction of criteria from one level of evaluation to another. This type of preference representation can be formulated in terms of  Choquet integral based on interval level dependent capacities proposed in \cite{greco2011choquet}. \\
The second approach could seem more intuitive: it is reasonable to imagine that the importance and interaction of criteria change smoothly without sudden jumps from one level of evaluation to another. Ideally, a model of this type has a set of preference parameters modeling  importance and interaction of criteria for each level of evaluations. This is clearly a problem from the point of view of induction of these parameters from preference information supplied by the DM and their effective use in decision support. Therefore, if one wants that importance and interaction of criteria change continuously, he has to consider some model that represents these changes taking into account a not so big number of parameters that can be induced and handled with a quite manageable procedure, for example, by linear programming. With this aim, we propose the Choquet integral based on piecewise linear level dependent capacities. By using this model, the parameters representing importance and interaction between criteria at each level of evaluation can be determined as linear interpolation of the values that they assume in specific reference levels. We shall show that also the Choquet integral can be formulated as a linear function of the values assumed by the preference parameters in the reference levels, so that linear programming techniques and sampling procedures can still be used in applying ordinal regression, ROR and SMAA. Also this Choquet integral based on piecewise linear level dependent capacities that we are proposing to handle continuous changes in the preference parameters, must be seen in the context of tempered parsimony of multiple criteria decision aiding models that we are advocating. In synthetic terms, we can say that this model can be seen as the exemplification of a principle dual to the above mentioned Einstein  principle of parsimony: ``\textit{Everything should be made as realistic as possible, but not more complex}".  

The paper is organized as follows. In the next section, we present an introductory example that, step by step, informally introduces the model with its potentialities. In the third section, we recall the Choquet integral as well as the level dependent Choquet integral based on interval level dependent capacities, while, in the fourth section, we present the application of ROR and SMAA to this decision model. The fifth section presents the Choquet integral based on piecewise linear capacities permitting to represent continuous changes in importance and interaction of criteria. In the sixth section, a real world application of the level dependent Choquet integral to the ranking of universities is described. The last section collects conclusions and perspectives for future developments. 

\section{An introductory example}\label{ainex}
Let us explain the richness and usefulness of the level dependent Choquet integral coupled with methodologies taking into account robustness concerns, namely ROR and SMAA, and representing quite articulated preferences of a DM. Ideally this example illustrates the basic ideas of the tempered parsimony we are postulating, so that it is presented with a certain degree of detail. Inspired by \cite{Grabisch1996}, let us suppose that the dean of a scientifically oriented high school wants to provide an overall evaluation of its students. For the sake of simplicity, the dean wants to take into account only notes on Mathematics and Physics, expressed on a thirty point scale. The dean expresses the following convictions:
\begin{itemize}
	\item considering the required skills, Physics is more important than Mathematics in evaluating students with good notes on both subjects, that are those having a mark of at least 26 in these subjects;
	\item considering students needing to improve their background, that are students having a mark at most equal to 25 on the two subjects, Mathematics is more important than Physics;
	\item  a student with notes at least equal to 26 in both subjects is excellent if he has very good notes both in Mathematics and in Physics;
	\item among students with notes at most equal to 25 in both subjects, the dean prefers students with a relatively good note in at least one of the subjects, because this can be seen as a starting point to reduce the gap from better students.   
\end{itemize}

On the basis of the above considerations,  taking into account the students whose evaluations on the two subjects are presented in Table \ref{EvaluationsMatrixA}, the dean expresses the following preferences

$$
A \succ C \succ B \succ E \succ F \succ D
$$

\noindent where $ X \succ Y$ means that student $X$ is preferred to student $Y$, with $X,Y \in \{A,B,C,D,E,F\}$. 

\begin{table}[htbp]
  \centering
  \caption{Students' notes on Mathematics and Physics\label{EvaluationsMatrixA}}
    \begin{tabular}{l|cc}
          & Mathematics (M) & Physics (Ph) \\
	  \hline
		\hline
  $A$ & 28 & 28 \\
	$B$ & 30 & 26 \\
	$C$ & 26 & 30 \\
	$D$ & 23 & 23 \\
	$E$ & 25 & 21 \\
	$F$ & 21 & 25 \\
	    \end{tabular}%
\end{table}%

One can see that the dean is giving a rich preference information, but one can also wonder if it is possible to represent his preferences using a model as simple as possible. The simplest model one can consider is the weighted sum, that is 

\begin{equation}\label{WS}
U(X)=w_M\cdot M(X)+w_{Ph}\cdot Ph(X)
\end{equation}
where 
\begin{itemize}
	\item $U(X)$ is the overall evaluation of $X$, 
	\item $M(X)$ and $Ph(X)$ are the notes of $X$ in Mathematics and Physics, respectively,
	\item $w_M$ and $w_{Ph}$ are the weights of Mathematics and Physics and they are such that, $w_M \geqslant 0$, $w_{Ph} \geqslant 0$ and $w_M + w_{Ph} = 1$.
\end{itemize}

Apparently, a so simple model cannot model the preference information supplied by the dean. On one hand, taking into account students $B$ and $C$ and reminding that the dean prefers $C$ over $B$, one gets, 

\begin{equation*}
U(B) = w_{M} \cdot 30 + w_{Ph} \cdot 26 < w_{M} \cdot 26 + w_{Ph} \cdot 30 = U(C)
\end{equation*}

\noindent implying that $w_{M}<w_{Ph}$. On the other hand, considering $E$ and $F$ and reminding that the dean prefers $E$ over $F$, one gets

\begin{equation*}
U(E) = w_{M} \cdot 25 + w_{Ph} \cdot 21 > w_{M} \cdot 21 + w_{Ph} \cdot 25 = U(F)
\end{equation*}
that implies $w_{M}>w_{Ph}$. Of course the two inequalities translating the preferences of the dean are incompatible and, consequently, the weighted sum is not able to represent these preferences. 
  
Looking for an amendment of the weighted sum able to represent preferences that cannot be expressed by using (\ref{WS}), one could imagine that weights depend on the level of the notes. For example, one can imagine that Mathematics has a weight $w^1_M$ for notes up to 25, that is $M^1(X)=min\{M(X),25\}$, and weight $w^2_M$ for the possible excess to 25, that is, $M^2(X)=\max\{M(X)-25,0\}$. Analogously, one could assume that Physics has weight $w^1_{Ph}$ for notes up to 25, that is, $Ph^1(X)=\min\{Ph(X),25\}$, and weight $w^2_{Ph}$ for the possible excess to 25, that is, $Ph^2(X)=\max\{Ph(X)-25,0\}$. Nonnegativity and normalization of the weights in this case would require that 

\begin{equation*}
w^1_M, w^2_M, w^1_{Ph}, w^2_{Ph} \geqslant 0, \;w^1_M + w^1_{Ph}=1,\; w^2_M + w^2_{Ph}=1.  
\end{equation*}

\noindent In this way, we could have a new model, the \textit{level dependent weighted sum}, (in fact, the two additive level dependent weighted sum), having the following form:
\begin{equation*}
U_{LDWS}(X)=w^1_M\cdot M^1(X)+w^2_M\cdot M^2(X)+w^1_{Ph}\cdot Ph^1(X)+w^2_{Ph}\cdot Ph^2(X)
\end{equation*}
\noindent that is 
\begin{equation*}
U_{LDWS}(X)=w^1_M\cdot\min\{M(X),25\}+w^2_M\cdot\max\{M(X)-25,0\}+w^1_{Ph}\cdot\min\{Ph(X),25\}+
\end{equation*}
$$
+w^2_{Ph}\cdot\max\{Ph(X)-25,0\}.
$$
The level dependent weighted sum permits to take into account that, for notes at least equal to 26, Physics is more important than Mathematics ($w^2_{Ph} > w^2_{M}$), while for notes at most equal to 25, it is exactly the opposite ($w^1_M > w^1_{Ph}$). 

The level dependent weighted sum is therefore able to represent some of the preferences provided by the dean. Indeed, considering students $B, C, E$ and $F$, the preference of $C$ over $B$ is translated to the constraint 
\begin{equation*}
U_{LDWS}(B)=w^1_M\cdot\min\{30,25\}+w^2_M\cdot\max\{30-25,0\}+w^1_{Ph}\cdot\min\{26,25\}+w^2_{Ph}\cdot\max\{26-25,0\}
\end{equation*}
$$
<
$$
\begin{equation}\label{LDWSBC}
w^1_M\cdot\min\{26,25\}+w^2_M\cdot\max\{26-25,0\}+w^1_{Ph}\cdot\min\{30,25\}+w^2_{Ph}\cdot\max\{30-25,0\}=U_{LDWS}(C)
\end{equation}
\noindent while, the preference of $E$ over $F$ is translated to the constraint 
\begin{equation*}
U_{LDWS}(E)=w^1_M\cdot\min\{25,25\}+w^2_M\cdot\max\{25-25,0\}+w^1_{Ph}\cdot\min\{21,25\}+w^2_{Ph}\cdot\max\{21-25,0\}
\end{equation*}
$$
>
$$
\begin{equation}\label{LDWSEF}
w^1_M\cdot\min\{21,25\}+w^2_M \cdot\max\{21-25,0\}+w^1_{Ph}\cdot\min\{25,25\}+w^2_{Ph}\cdot\max\{25-25,0\}=U_{LDWS}(F).
\end{equation}
On one hand, by (\ref{LDWSBC}) one gets
\begin{equation*}
w^1_M \cdot 25+ w^2_M \cdot 5 +w^1_{Ph} \cdot 25+w^2_{Ph} \cdot 1 < w^1_M \cdot 25+ w^2_M \cdot 1 +w^1_{Ph} \cdot 25+w^2_{Ph} \cdot 5
\end{equation*}
and, consequently, $w^{2}_{M}<w^{2}_{Ph}$.

On the other hand, by (\ref{LDWSEF}) one gets
\begin{equation*}
w^1_M \cdot 25+ w^2_M \cdot 0 +w^1_{Ph} \cdot 21+w^2_{Ph} \cdot 0 > w^1_M \cdot 21+ w^2_M \cdot 0 +w^1_{Ph} \cdot 25+w^2_{Ph} \cdot 0,
\end{equation*}
\noindent and, consequently, $w^{1}_{M}>w^{1}_{Ph}$.  

Therefore, we have seen that the level dependent weighted sum permits to represent importance of criteria changing from one level to another. However, we have still to check if $U_{LDWS}$ allows to consider also the conjunctive or disjunctive nature of the overall evaluation that changes from one level to another. In fact, according with the preference information supplied by the dean, the aggregation procedure has to be: 
\begin{itemize}
\item conjunctive for notes at least equal to 26 in both subjects: in this case, a student good in both subjects is more appreciated than a student very good in one subject and not so good in the other,
\item disjunctive for notes at most equal to 25 in both subjects: in this case, a student relatively good in at least one of the two subjects is more appreciated than a student presenting medium marks on both subjects.
\end{itemize}

In consequence of the previous observations, in evaluating students $A$, $B$ and $C$, the dean stated that $A$ is preferred to $C$ that, in turn, is preferred to $B$. Using the $U_{LDWS}$, the previous preferences are translated into the constraints 
$$
U_{LDWS}(A)>U_{LDWS}(C)>U_{LDWS}(B).
$$
 
\noindent The first inequality, that is, $U_{LDWS}(A)>U_{LDWS}(C)$, implies that

$$
w^1_M\cdot\min\{28,25\}+w^2_M\cdot\max\{28-25,0\}+w^1_{Ph}\cdot\min\{28,25\}+w^2_{Ph}\cdot\max\{28-25,0\}
$$
$$
>
$$
$$
w^1_M\cdot\min\{26,25\}+w^2_M\cdot\max\{26-25,0\}+w^1_{Ph}\cdot\min\{30,25\}+w^2_{Ph}\cdot\max\{30-25,0\}
$$

\noindent from which, $w^{2}_{M}>w^{2}_{Ph}$.  

\noindent The second inequality, that is $U_{LDWS}(C)>U_{LDWS}(B)$, instead, implies that 

$$
w^1_M\cdot\min\{26,25\}+w^2_M\cdot\max\{26-25,0\}+w^1_{Ph}\cdot\min\{30,25\}+w^2_{Ph}\cdot\max\{30-25,0\}
$$
$$
>
$$
$$
w^1_M\cdot\min\{30,25\}+w^2_M\cdot\max\{30-25,0\}+w^1_{Ph}\cdot\min\{26,25\}+w^2_{Ph}\cdot\max\{26-25,0\}
$$

\noindent and, consequently, $w^{2}_{Ph}>w^{2}_{M}$. Of course, the two obtained inequalities are incompatible since $w^{2}_{Ph}$ cannot be simultaneously lower and greater than $w^{2}_{M}$.  

Analogously, taking into consideration students $D, E$ and $F$, and reminding that the dean stated that $E$ is preferred to $F$ that, in turn, is preferred to $D$, the application of the level dependent weighted sum would translate these preferences into the following chain of constraints

$$
U_{LDWS}(E)>U_{LDWS}(F)>U_{LDWS}(D). 
$$

On one hand, the first inequality, that is $U_{LDWS}(E)>U_{LDWS}(F)$, implies $w^1_M > w^1_{Ph}$, while, on the other hand, the second inequality, that is $U_{LDWS}(F)>U_{LDWS}(D)$, implies that $w^1_M < w^1_{Ph}$. Again, the two inequalities are incompatible and, consequently, even the level dependent weighted sum is not able to represent the preferences given by the DM. 

To represent the more or less conjunctive character of the aggregation procedure one can use the Choquet integral \cite{choquet1953theory} being a generalization of the weighted sum that has been successfully applied in multiple criteria decision analysis to handle interaction between criteria. The Choquet integral of the evaluations of student $X$ on Mathematics and Physics can be defined as follows:
\begin{equation*}
U_{Ch}(X)=
\left\{
\begin{array}{lll}
M(X)+w^{Ch}_{Ph} \cdot (Ph(X) - M(X) ) & \mbox{iff} & M(X)\leqslant Ph(x),\\[0,5mm]
Ph(X)+w^{Ch}_{M} \cdot (M(X) - Ph(X) ) & \mbox{iff} & M(X)\geqslant Ph(x),\\[0,5mm]
\end{array}
\right.
\end{equation*}
with  $0 \leqslant w^{Ch}_{M}, w^{Ch}_{Ph} \leqslant 1$ and not necessarily $w^{Ch}_{M} + w^{Ch}_{Ph} =1$. Let us see if the Choquet integral permits to represent the preferences expressed by the dean on students $A, B$ and $C$. Since $A \succ C \succ B$, we should have
$$
U_{Ch}(A)>U_{Ch}(C)>U_{Ch}(B),
$$
and, consequently,

$$
28+w^{Ch}_{Ph} \cdot 0 > 26+4\cdot w^{Ch}_{Ph}>26+4\cdot w^{Ch}_{M}
$$

\noindent from which 
\begin{equation}\label{ChoquetStudrankABCcond}
0.5 > w^{Ch}_{Ph}  > w^{Ch}_{M} 
\end{equation}
which is in agreement with the greater importance assigned by the dean to Physics over Mathematics in case of notes not smaller than 26 in both subjects. Thus we proved that the Choquet integral can represent the preferences about students $A, B$ and $C$.

With analogous considerations, we can show that the Choquet integral can represent also the preferences over students $D, E$ and $F$.   In fact, from $E \succ F \succ D$ we obtain
$$
U_{Ch}(E)>U_{Ch}(F)>U_{Ch}(D)
$$
\noindent and, consequently,
$$
21+4\cdot w^{Ch}_{M} > 21+4\cdot w^{Ch}_{Ph} > 23+0\cdot w^{Ch}_{Ph}.
$$
The inequalities above imply 
\begin{equation}\label{ChoquetStudrankcondEFD}
w^{Ch}_{M}  > w^{Ch}_{Ph} > 0.5 
\end{equation}
which are in agreement with the greater importance assigned by the dean to  Mathematics over Physics in case of notes not greater than 25 in both subjects. Thus also the preferences between the students $E, F$ and $D$ can be represented by the Choquet integral $U_{Ch}$.

However, let us remark that conditions (\ref{ChoquetStudrankABCcond}) and  (\ref{ChoquetStudrankcondEFD}) are incompatible, which means that the Choquet integral is not able to represent, simultaneously, all the preferences expressed by the dean over the six students. The problem is that the importance of criteria as well as the conjunctive or disjunctive nature of the aggregation procedure are different for students with notes at least equal to 26 and students with notes at most equal to 25. Thus, one has to further generalize the model considering the level dependent Choquet integral \cite{greco2011choquet} that permits to take into account importance of criteria and interaction between criteria related to conjunctive or disjunctive nature of the aggregation procedure that can change from one level to the others of the evaluations on considered criteria. With respect to the overall evaluation of students on Mathematics and Physics, the level dependent Choquet integral can be formulated as follows:

\begin{equation}\label{ChoqLevDep}
U_{LDCh}(X)=
\left\{
\begin{array}{lll}
\overline{U}_{M}(X) & \mbox{iff} & M(X)\leqslant Ph(X),\\[0,4mm]
\overline{U}_{Ph}(X) & \mbox{iff} & M(X)\geqslant Ph(X),\\[0,4mm]
\end{array}
\right.
\end{equation}
\noindent where
$$
\overline{U}_{M}(X)=M(X)+w_{Ph}^{Ch,1} \cdot (\min\{Ph(X),25\} - \min\{M(X),25\})+w_{Ph}^{Ch,2} \cdot (\max\{Ph(X),25\}-\max\{M(X),25\})
$$ 
\noindent and
$$
\overline{U}_{Ph}(X)=Ph(X)+w_{M}^{Ch,1} \cdot (\min\{M(X),25\} - \min\{Ph(X),25\})+w_{M}^{Ch,2} \cdot (\max\{M(X),25\}-\max\{Ph(X),25\}).
$$

Let us check if the level dependent Choquet integral is able to represent the preferences over the six students, that is if there exist $w_{M}^{Ch,1}$, $w_{M}^{Ch,2}$, $w_{Ph}^{Ch,1}$ and $w_{Ph}^{Ch,2}$ such that
$$
U_{LDCh}(A) > U_{LDCh}(C) > U_{LDCh}(B) >U_{LDCh}(E) >U_{LDCh}(F) >U_{LDCh}(D).
$$
Using (\ref{ChoqLevDep}), the previous inequalities are translated into the following chain of constraints: 
$$
U_{LDCh}(A)=28 +w_{Ph}^{Ch,1} \cdot (\min\{28,25\} - \min\{28,25\})+w_{Ph}^{Ch,2} \cdot (\max\{28,25\}-\max\{28,25\})
$$
$$
>
$$
$$
U_{LDCh}(C)=26 +w_{Ph}^{Ch,1} \cdot (\min\{30,25\} - \min\{26,25\})+w_{Ph}^{Ch,2} \cdot (\max\{30,25\}-\max\{26,25\})
$$
$$
>
$$
$$
U_{LDCh}(B)=26 +w_{M}^{Ch,1} \cdot (\min\{30,25\} - \min\{26,25\})+w_{M}^{Ch,2} \cdot (\max\{30,25\}-\max\{26,25\})
$$
$$
>
$$
$$
U_{LDCh}(E)=21 +w_{M}^{Ch,1} \cdot (\min\{25,25\} - \min\{21,25\})+w_{M}^{Ch,2} \cdot (\max\{25,25\}-\max\{21,25\})
$$
$$
>
$$
$$
U_{LDCh}(F)=21 +w_{Ph}^{Ch,1} \cdot (\min\{25,25\} - \min\{21,25\})+w_{Ph}^{Ch,2} \cdot (\max\{25,25\}-\max\{21,25\})
$$
$$
>
$$
$$
U_{LDCh}(D)=23 +w_{Ph}^{Ch,1} \cdot (\min\{23,25\} - \min\{23,25\})+w_{Ph}^{Ch,2} \cdot (\max\{23,25\}-\max\{23,25\}).
$$
We obtain the following conditions:
\begin{enumerate}
\item $U_{LDCh}(A) > U_{LDCh}(C)$ $\Rightarrow$ $2 > w^{Ch,2}_{Ph} \cdot 4$ $\Rightarrow$ $0.5 > w^{Ch,2}_{Ph}$,
\item $U_{LDCh}(C) > U_{LDCh}(B)$ $\Rightarrow$ $w^{Ch,2}_{Ph} \cdot 4 > w^{Ch,2}_{M} \cdot 4$ $\Rightarrow$ $w^{Ch,2}_{Ph} > w^{Ch,2}_{M}$,
\item $U_{LDCh}(E) > U_{LDCh}(F)$ $\Rightarrow$ $w^{Ch,1}_{M} \cdot 4 > w^{Ch,1}_{Ph} \cdot 4$ $\Rightarrow$ $w^{Ch,1}_{M} > w^{Ch,1}_{Ph}$,
\item $U_{LDCh}(F) > U_{LDCh}(D)$ $\Rightarrow$ $w^{Ch,1}_{Ph} \cdot 4 > 2$ $\Rightarrow$ $w^{Ch,1}_{Ph} > 0.5$. 
\end{enumerate}
The four inequalities above are not in contradiction and, therefore, the level dependent Choquet integral permits to represent the preferences expressed by the dean. Moreover, the conditions perfectly translate the preferences of the dean in terms of importance of criteria and the desired conjunctive and disjunctive nature of the aggregation. However, there is still another aspect that deserves to be investigated. Let us consider the new students $G$, $H$ and $I$ whose notes in Mathematics and Physics are presented in Table \ref{EvaluationsMatrixB} and let us try to rank them on the basis of conditions 1.-4. above. 

\begin{table}[!h]
  \centering
  \caption{Notes on Mathematics and Physics of the new students\label{EvaluationsMatrixB}}
    \begin{tabular}{l|cc}
          & Mathematics (M) & Physics (Ph) \\
	  \hline
		\hline
  $G$ & 26 & 29 \\
	$H$ & 29 & 26 \\
	$I$ & 30 & 27 \\
	    \end{tabular}%
\end{table}

\noindent On one hand, comparing $G$ and $H$, we get that 
$$
U_{LDCh}(G)=26 +w_{Ph}^{Ch,1} \cdot (\min\{29,25\} - \min\{26,25\})+w_{Ph}^{Ch,2} \cdot (\max\{29,25\}-\max\{26,25\})
$$
$$
>
$$
$$
26 +w_{M}^{Ch,1} \cdot (\min\{29,25\} - \min\{26,25\})+w_{M}^{Ch,2} \cdot (\max\{29,25\}-\max\{26,25\})=U_{LDCh}(H)
$$
\noindent being always true since it coincides with condition 2. \\
On the other hand, comparing $G$ and $I$, we can have $U_{LDCh}(G)>U_{LDCh}(I)$ as well as $U_{LDCh}(I)>U_{LDCh}(G)$, depending on the values of $w_{M}^{Ch,1}$, $w_{M}^{Ch,2}$, $w_{Ph}^{Ch,1}$ and $w_{Ph}^{Ch,2}$. Indeed, we get
$$
U_{LDCh}(G)=26 +w_{Ph}^{Ch,1} \cdot (\min\{29,25\} - \min\{26,25\})+w_{Ph}^{Ch,2} \cdot (\max\{29,25\}-\max\{26,25\})
$$
$$
>
$$
$$
27 +w_{M}^{Ch,1} \cdot (\min\{30,25\} - \min\{27,25\})+w_{M}^{Ch,2} \cdot (\max\{30,25\}-\max\{27,25\})=U_{LDCh}(I),
$$
\noindent iff 
\begin{equation*}
26+w^{Ch,2}_{Ph} \cdot 3 > 27 + w^{Ch,2}_{M} \cdot 3, 
\end{equation*}
as well as $U_{LDCh}(G)<U_{LDCh}(I)$ iff
\begin{equation*}
26+w^{Ch,2}_{Ph} \cdot 3 < 27 + w^{Ch,2}_{M} \cdot 3.
\end{equation*}
None of the two inequalities is in contradiction with conditions 1.-4. Therefore, we have to conclude that the preference information given by the DM, and formally expressed by conditions 1.-4., does not permit to rank order unambiguously all students since different values of parameters can provide a different comparison of $G$ and $I$. For example, taking $w^{Ch,2}_{M}=\frac{1}{18}$ and $w^{Ch,2}_{Ph}=\frac{4}{9}$ we get 
\begin{equation*}
U_{LDCh}(G)=27.333 > 27.166=U_{LDCh}(I) 
\end{equation*}
while, taking $w^{Ch,2}_{M}=\frac{1}{6}$ and $w^{Ch,2}_{Ph}=\frac{1}{3}$ we get 
\begin{equation*}
U_{LDCh}(G)=27 < 27.5=U_{LDCh}(I).
\end{equation*} 
With a greater level of detail, we can say that student $G$ will be overall evaluated at least as good as student $I$, that is, $U_{LDCh}(G) \geqslant U_{LDCh}(I)$, in case  $w^{Ch,2}_{M}$ and  $w^{Ch,2}_{Ph}$ satisfy conditions 1.-4. with the addition of the following one 
\begin{equation*}
U_{LDCh}(G) \geqslant U_{LDCh}(I) \Leftrightarrow
26+w^{Ch,2}_{Ph} \cdot 3 \geqslant 27 + w^{Ch,2}_{M} \cdot 3 \Leftrightarrow w^{Ch,2}_{Ph} \cdot 3 \geqslant 1 + w^{Ch,2}_{M}\cdot 3.
\end{equation*}

\begin{figure}[!h]
\begin{center}
\caption{Representation of the subset of weights in the $w^{Ch,2}_{M}$-$w^{Ch,2}_{Ph}$ space for which $G$ is preferred to $I$ (the triangle $W_{G\succsim I}\equiv P_2P_3P_4$) and $I$ is preferred to $G$ (the trapezoid $W_{I\succsim G}\equiv P_1P_2P_4O$) \label{FigureLD}}
\includegraphics[scale=0.3]{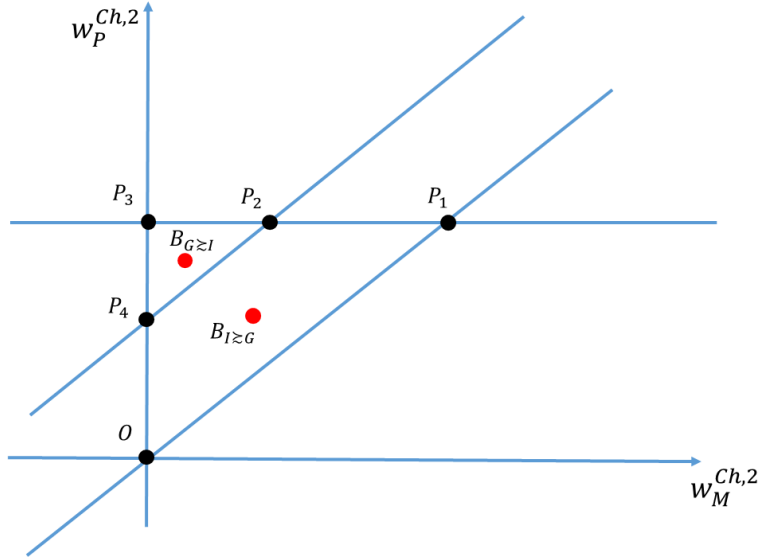}
\end{center}
\end{figure}

In the space $w^{Ch,2}_{M}-w^{Ch,2}_{Ph}$, the preference parameters compatible with this piece of preference information are represented by the points in the triangle $W_{G\succsim I}$ having vertices $P_2\equiv(\frac{1}{6},\frac{1}{2})$, $P_3\equiv(0,\frac{1}{2})$ and $P_4\equiv(0,\frac{1}{3})$ in Figure \ref{FigureLD}. Instead, student $I$ will get an overall evaluation not worse than $G$, that is, $U_{LDCh}(I) \geqslant U_{LDCh}(G)$, in case  $w^{Ch,2}_{M}$ and  $w^{Ch,2}_{Ph}$ satisfy conditions 1.-4. and condition  
\begin{equation*}
U_{LDCh}(G) \leqslant U_{LDCh}(I) \Leftrightarrow
26+w^{Ch,2}_{Ph} \cdot 3 \leqslant 27 + w^{Ch,2}_{M} \cdot 3 \Leftrightarrow w^{Ch,2}_{Ph} \cdot 3 \leqslant 1 + w^{Ch,2}_{M}\cdot 3.
\end{equation*}
Therefore, the preference parameters compatible with this piece of preference information are represented by the points in the trapezoid $W_{I\succsim G}$ having vertices $P_1\equiv(\frac{1}{2},\frac{1}{2})$, $P_2\equiv(\frac{1}{6},\frac{1}{2})$, $P_4\equiv(0,\frac{1}{3})$ and $O\equiv(0,0)$ in Figure \ref{FigureLD}. The centroids of $W_{G\succsim I}$ and $W_{I\succsim G}$ are the pairs of weights $(w^{Ch,2}_{M},w^{Ch,2}_{Ph})$ $B_{G\succsim I}\equiv(\frac{1}{18},\frac{4}{9})$,  and $B_{I\succsim G}\equiv(\frac{1}{6},\frac{1}{3})$, respectively, that have been used as examples of parameters for which $U_{LDCh}(G) > U_{LDCh}(I)$ and $U_{LDCh}(G) < U_{LDCh}(I)$.  

The above considerations show that a special attention has to be given to robustness concerns related to the application of the decision model. In general, it is always advisable to avoid to consider only one instance of the decision model compatible with the preference information provided by the DM (in our example the dean) because its choice is arbitrary to some extent. One possible answer to this type of situations is to distinguish preferences that hold for all compatible decision models and preferences that hold for at least one compatible decision model, called respectively necessary and possible preferences (\cite{CGKSml,CGKSen,greco2008ordinal}). With respect to students $G$, $H$ and $I$ we have to conclude that $G$ is necessarily preferred to $H$ as well as $I$ is necessarily preferred to $G$ in consequence of its dominance. Instead, with respect to students $G$ and $I$ we have only a possible preference both for $G$ over $I$ and for $I$ over $G$. \\
One can try to give a more precise evaluation of the range of compatible decision models for which student $G$ is preferred to student $I$ and vice versa by estimating the probability that picking randomly a vector of compatible parameters one gets $U_{LDCh}(G)>U_{LDCh}(I)$ or the opposite. Let us observe that the overall evaluations of students $G, H$ and $I$ involve only the weights $w^{Ch,2}_{M}$ and $w^{Ch,2}_{Ph}$. Consequently, to compare students $G$, $H$ and $I$, the set of the compatible decision models can be identified with the set $W$ of pairs of weights $\left(w^{Ch,2}_{M},w^{Ch,2}_{Ph}\right)$ satisfying conditions 1.-4. (in fact, only conditions 1. and 2., because conditions 3. and 4. are related to the weights $w^{Ch,1}_{M}$ and $w^{Ch,1}_{Ph}$). The set $W$ of compatible weights $\left(w^{Ch,2}_{M},w^{Ch,2}_{Ph}\right)$ are represented by the points in triangle having vertices $O$, $P_1$ and $P_3$ in Figure \ref{FigureLD}. We have already seen that the triangle $W_{G\succsim I}$, which vertices are $P_{2}$, $P_{3}$ and $P_{4}$, is composed of the set of pairs of weights $(w^{Ch,2}_{M},w^{Ch,2}_{Ph})$ for which $G$ is at least as good as $I$, while the trapezoid $W_{I\succsim G}$, which vertices are $O$, $P_{1}$, $P_{2}$ and $P_{4}$, is composed of the set of pairs of weights  for which $I$ is at least as good as $G$. One can suppose a uniform distribution on the set of compatible pairs of weights $W$. Under this hypothesis, the probability of random picking a decision model for which $G$ is preferred to $I$ is given by the ratio between the area of $W_{G\succsim I}$ and the area of $W$, while the analogous probability for $I$ being preferred to $G$ is given by the ratio between the area of $W_{I\succsim G}$ and the area of $W$. Since the area of $W$, $W_{G\succsim I}$ and $W_{I\succsim G}$ are $\frac{1}{8}$, $\frac{1}{72}$ and $\frac{8}{72}$, respectively, we have that the probability of obtaining $G$ preferred over $I$ is $\frac{1}{9}$, while the probability of having $I$ preferred over $G$ is $\frac{8}{9}$. Instead, we have already seen that for all compatible decision models $G$ is preferred to $H$ as well as $I$ is preferred to $H$, so that the corresponding probabilities are equal to 1. One could also look for the probability for which each student $G$, $H$ and $I$ attains the first, the second and the third ranking position. Since we have already seen both $G$ and $I$ are always preferred to $H$, we have to conclude that $H$ takes always the third ranking position, so that his probability of being the third is 1 while the probability of being the first or the second is null. Instead $G$ takes the first ranking position if he is preferred to $I$, otherwise he is the second. Consequently, the probability that $G$ attains the first ranking position is $\frac{1}{9}$, while the probability of being the second is $\frac{8}{9}$. Analogously, we have that for $I$, the probabilities of being the first and the second are $\frac{8}{9}$ and $\frac{1}{9}$, respectively. For $G$ and $I$ the probability of being the third is null. Let us conclude this section by discussing an important element of the decision aiding procedure we presented in this example. It is strongly based on the reference points that are used to create the partition of the interval of evaluations. For instance, in our example, the dean is considering not so good notes, those not greater than 25 and good notes from 26 upwards. Of course these reference points have to be fixed together with the DM because they should express real aspects of his perception of evaluations on considered criteria. Overall, since a decision aiding procedure aims at constructing some arguments explaining the decision to be taken \cite{Roy2010a}, a specific effort has to be put in avoiding to reference points that could permit to represent the preference expressed by the DM but that could be perceived by him as artificial and not credible. Instead, since the idea of reference point is so natural in the human perceptions and decisions (see e.g. \cite{kahneman1992reference,rosch1975cognitive}) it is of fundamental importance for the successful application of the decision aiding procedure to discuss the reference points with the DM that has to accept them as meaningful and sensible. 

In the following we shall show how to apply systematically the decision analysis approach proposed in this introductory example.
   
\section{Basic concepts}\label{BasConc}
In this section we shall briefly recall the Choquet integral (Section \ref{ChInt}) and the level dependent Choquet integral (Section \ref{LevDepCh}). A more detailed description of the two preference models can be found in \cite{Grabisch2008} and \cite{greco2011choquet}, respectively.
\subsection{The Choquet integral preference model}\label{ChInt}
Let us consider a set of alternatives $A$ to be evaluated with respect to criteria $G=\{g_1,\ldots,g_n\}$ where, for each $g_i\in G$ and for each $a\in A$, $g_i(a)\in\rea$ represents the evaluation of $a$ on $g_{i}$. For the sake of simplicity and without loss of generality, we suppose that all criteria have an increasing direction of preference, that is, the greater $g_i(a)$, the better is $a$ on $g_{i}$. In the following, we shall denote criterion $g_i$ by its index $i$ only. Therefore, the statements $g_i\in G$ or $i\in G$ will be equivalent. \\
A capacity is a set function $\mu:2^{G}\rightarrow[0,1]$ such that $\mu(B)\leqslant\mu(E)$ for all $B\subseteq E\subseteq G$ (monotonicity constraints), $\mu(\emptyset)=0$ and $\mu(G)=1$ (normalization constraints). The M\"{o}bius transform of $\mu$, instead, is a set function $m:2^{G}\rightarrow\rea$ such that, for all $E\subseteq G$, $\mu(E)=\sum_{B\subseteq E}m(B)$ \cite{Shafer}. Consequently, the monotonicity and normalization constraints defined above can be rewritten in terms of $m$ as follows: 
\begin{itemize}
\item $\forall i\in G$ and for all $E\subseteq G\setminus\{i\}$, $\displaystyle\sum_{R\subseteq E}m\left(R\cup\{i\}\right)\geqslant 0$ (monotonicity constraints),
\item $m\left(\emptyset\right)=0$, $\displaystyle\sum_{E\subseteq G}m(E)=1$ (normalization constraints).
\end{itemize} 
In this case, the Choquet integral of $\mathbf{x}=\left(x_1,\ldots,x_n\right)\in\rea^{n}$ w.r.t. $\mu$ can be written as 

\begin{equation}\label{Choquet}
Ch(\mathbf{x},\mu)=\sum_{i=1}^{n}\mu(N_i)\left[x_{(i)}-x_{(i-1)}\right]
\end{equation}

\noindent where $(\cdot)$ stands for a permutation of the indices of criteria such that $0=x_{(0)}\leqslant\ldots\leqslant x_{(i)}\leq\ldots\leqslant x_{(n)}$ and $N_{i}=\{j\in G: x_j\geqslant x_{(i)}\}$ for all $i=1,\ldots,n.$ From now on, we shall use $a$ to denote an alternative belonging to $A$ and $\mathbf{a}$ to indicate the vector which components are the evaluations of $a$ on the considered criteria.

The use of (\ref{Choquet}) involves the knowledge of $2^n-2$ parameters (one for each subset of $G$ apart from $\emptyset$ and $G$ since $\mu(\emptyset)=0$ and $\mu(G)=1$). Anyway, for real world applications, $2$-additive capacities are enough to represent the preferences of the DM where a capacity is said $k$-additive iff its M\"{o}bius transform $m$ is such that $m(E)=0$ for all $E\subseteq G: |E|>k$ \cite{grabisch1997k}. In this case, the Choquet integral of $\mathbf{x}$ can be rewritten as  

$$
Ch(\mathbf{x},\mu)=\sum_{i=1}^{n}m\left(\{i\}\right)x_i+\sum_{\{i,j\}\subseteq G}m\left(\{i,j\}\right)\min\{x_i,x_j\}
$$

\noindent while monotonicity and normalization constraints become 

\begin{itemize}
\item $\displaystyle m\left( \emptyset\right) =0,\,\,\,\sum_{i\in G}m\left(\left\{ i\right\}\right)+\sum_{\left\{ i,j\right\}\subseteq G}m\left(\left\{ i,j\right\}\right) =1,$ (normalization constraints)
\item $
\left\{
\begin{array}{l}
\displaystyle m\left( \left\{ i\right\}\right)\geq 0,\; \forall i \in G,\\[2mm]
\displaystyle m\left( \left\{ i\right\}\right) +{\sum_{j\in E}}\:m\left(\left\{ i,j\right\}\right) \geqslant 0, \; \forall i\in G
\:\mbox{and}\:\forall\: E\subseteq G \setminus \left\{i\right\},$ $E\neq\emptyset
\end{array}
\right.
$ (monotonicity constraints).
\end{itemize}

\subsection{The level dependent Choquet integral}\label{LevDepCh}
A generalized capacity $\mu^{L}$ is a function $\mu^{L}:2^{G}\times [\alpha,\beta]\rightarrow [0,1]$, with $[\alpha,\beta]\subseteq\mathbb{R}$, such that:
\begin{description}
\label{ldcconditions}
 \item \textbf{1.a}) for all $t\in\left[\alpha,\beta\right]$ and for all $B \subseteq E \subseteq G, \; \mu^{L}(B,t)\leqslant \mu^{L}(E,t)$,
 \item \textbf{2.a}) for all $t\in\left[\alpha,\beta\right]$, $\mu^{L}(\emptyset,t)=0$ and $\mu^{L}(G,t)=1$,
 \item \textbf{3.a}) for all $t\in\left[\alpha,\beta\right]$ and for all $E\subseteq G,\; \mu^{L}(E,t)$ is Lebesgue measurable if considered as a function of $t$.
\end{description}

\noindent Let us observe that items \textbf{1.a}) and \textbf{2.a}) above imply that $\mu^{L}$ gives a capacity for each $t\in[\alpha,\beta]$. 

The level dependent Choquet integral of $\mathbf{x}=[x_{1},\ldots, x_{n}]\in [\alpha,\beta]^{n}$ w.r.t. $\mu^{L}$, can be defined as follows:

$${Ch}^{L}(\mathbf{x}, \mu^{L})=\displaystyle\int^{\max \{x_{1},\ldots,x_{n}\}}_{\min\{x_{1},\ldots,x_{n} \}}\mu^{L}(\{i \in G : x_{i}\geqslant t \},t)\;dt + \min \{x_{1},\ldots,x_{n}\}.$$


\noindent If the values of criteria are normalized between 0 and 1 and, consequently, $\mathbf{x}\in [0,1]^{n}$, we get
 
$$
{Ch}^{L}(\mathbf{x}, \mu^{L})=\displaystyle\int^{1}_{0}\mu^{L}(\{i \in G : x_{i}\geqslant t \},t)\;dt.
$$

\subsubsection{Interval level dependent capacity}\label{IntLevDepCh}
The use of the level dependent Choquet integral can be limited by noticing that it involves the definition of an arbitrary number of capacities $\mu^{L}(\cdot,t)$, one for each level $t \in [\alpha,\beta].$ To overcome this problem, in \cite{greco2011choquet} the authors suggest the use of an \textit{interval level dependent capacity}. A generalized capacity $\mu^{L}$ is called interval level dependent if there exists 
\begin{itemize}
\item a partition $a_{0},a_{1},\ldots, a_{p-1},a_{p}$, of the interval $[\alpha,\beta]$, where  $\alpha=a_{0}<a_{1}<\ldots<a_{p-1}<a_{p}=\beta$, 
\item $p$ capacities $\mu_{1},\ldots,\mu_{p}$ defined on $2^{G}$,
\end{itemize}
\noindent such that, for all $E\subseteq G$, $\mu^{L}(E,t)=\mu_{r}(E)$  if $t \in [a_{r-1},a_{r}[$, $r=1,\ldots,p-1,$ and $\mu^{L}(E,t)=\mu_{p}(E)$ if $t\in[a_{p-1},a_p]$. For the sake of simplicity, from now on, even if we shall write $[a_{r-1},a_{r}[$ with $r=1,\ldots,p$, we shall consider as last interval the closed interval $[a_{p-1},a_{p}]$. Moreover, let us observe that in defining the partition we can also consider closed intervals $\left([a_{r-1},a_r]\right)$, half-opened intervals ($[a_{r-1},a_r[$ or $]a_{r-1},a_r]$) or opened intervals $\left(]a_{r-1},a_r[\right)$. What is important is that all subintervals of the decomposition are pairwise disjoint. 

\noindent If $\mu^{L}$ is an interval level dependent capacity, the following theorem allows to compute the level dependent Choquet integral of $\mathbf{x}$ w.r.t. $\mu^{L}$ as the sum of $p$ Choquet integrals, one for each interval $[a_{r-1},a_{r}[,\; r=1,\ldots,p$:
\begin{theorem}(see \cite{greco2011choquet})
\label{theorem1}
If $\mu^{L}$ is an interval level dependent capacity on $2^G$ relative to the breakpoints\\
$a_{0},a_{1},\ldots, a_{p-1},a_{p}\in [\alpha,\beta]$ and to the capacities $\mu_{1},\ldots,\mu_{p}$, then for each $\mathbf{x}\in [\alpha,\beta]^{n}$

$$
{Ch}^{L}(\mathbf{x},\mu^{L})=\displaystyle \sum_{r=1}^{p}Ch({\mathbf{x}^{r},\mu_{r}}),
$$
\noindent where $\mathbf{x}^{r}\in [0,a_{r}-a_{r-1}]^{n}$ is the vector which components $x_{i}^{r}$ are defined as

$$
x_{i}^{r} = \left\{ 
\begin{array}{ll}
         0             & \mbox{if $ \quad x_{i} < a_{r-1}$}\\   
			   x_{i}-a_{r-1} & \mbox{if $ \quad a_{r-1} \leqslant x_{i} < a_{r}$},\\
				 a_{r}-a_{r-1} & \mbox{if $ \quad x_{i} \geqslant a_{r}$}\\
         \end{array} 
				\right. 
$$ 
\noindent for all $i=1,\ldots,n.$
\end{theorem}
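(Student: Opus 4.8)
The plan is to reduce the level-dependent integral to a sum of ordinary Choquet integrals by using the \emph{layer-cake} representation of the latter together with a change of variable on each subinterval of the partition. First I would recall that, for any vector $\mathbf{y}$ with nonnegative components, the ordinary Choquet integral (\ref{Choquet}) admits the integral form
\begin{equation}\label{layercake}
Ch(\mathbf{y},\mu)=\int_{0}^{\max_i y_i}\mu(\{i\in G:y_i\geqslant t\})\,dt,
\end{equation}
which one checks by integrating the step function $t\mapsto\mu(\{i:y_i\geqslant t\})$ and comparing it with the discrete sum $\sum_i\mu(N_i)\left[y_{(i)}-y_{(i-1)}\right]$. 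Since every $\mathbf{x}^{r}$ has components in $[0,a_{r}-a_{r-1}]$ by construction, (\ref{layercake}) gives
$$
Ch(\mathbf{x}^{r},\mu_{r})=\int_{0}^{a_{r}-a_{r-1}}\mu_{r}(\{i\in G:x_i^{r}\geqslant s\})\,ds.
$$

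Next, on each subinterval I would substitute $t=s+a_{r-1}$, so that the range becomes $[a_{r-1},a_{r}]$, and then establish the key set identity
$$
\{i\in G:x_i^{r}\geqslant t-a_{r-1}\}=\{i\in G:x_i\geqslant t\}\qquad\text{for } t\in\,]a_{r-1},a_{r}].
$$
This is the heart of the argument and follows by inspecting the three cases in the definition of $x_i^{r}$: if $x_i<a_{r-1}$ then $x_i^{r}=0<t-a_{r-1}$ and also $x_i<t$; if $a_{r-1}\leqslant x_i<a_{r}$ then $x_i^{r}\geqslant t-a_{r-1}\Leftrightarrow x_i\geqslant t$; and if $x_i\geqslant a_{r}$ then $x_i^{r}=a_{r}-a_{r-1}\geqslant t-a_{r-1}$ while also $x_i\geqslant a_{r}\geqslant t$. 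In every case the two membership conditions coincide, whence
$$
Ch(\mathbf{x}^{r},\mu_{r})=\int_{a_{r-1}}^{a_{r}}\mu_{r}(\{i\in G:x_i\geqslant t\})\,dt.
$$

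Finally I would sum over $r=1,\ldots,p$. Since $\mu^{L}(E,t)=\mu_{r}(E)$ on $[a_{r-1},a_{r}[$ and the breakpoints form a set of Lebesgue measure zero, the $p$ subinterval integrals reassemble into a single integral over the whole range,
$$
\sum_{r=1}^{p}Ch(\mathbf{x}^{r},\mu_{r})=\int_{\alpha}^{\beta}\mu^{L}(\{i\in G:x_i\geqslant t\},t)\,dt.
$$
Splitting this integral at $\min\{x_1,\ldots,x_n\}$ and $\max\{x_1,\ldots,x_n\}$ then concludes: on $[\alpha,\min_i x_i[$ the active set equals $G$, so by \textbf{2.a}) the integrand is $1$ and contributes the additive constant $\min\{x_1,\ldots,x_n\}$ (using the normalised range $\alpha=0$); on $\,]\max_i x_i,\beta]$ the active set is empty and the integrand vanishes; and the remaining middle part is exactly the integral appearing in the definition of ${Ch}^{L}(\mathbf{x},\mu^{L})$.

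The step I expect to be the main obstacle is the middle one: proving the level-set identity after the change of variable and then handling the boundary levels carefully, namely discarding the measure-zero breakpoints and correctly extracting the additive term $\min\{x_1,\ldots,x_n\}$ from the lowest layers where $\mu^{L}(\cdot,t)=1$. Once that identity is secured, the remaining passages are routine bookkeeping with integrals.
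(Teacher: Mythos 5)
Your argument is correct, but note first that there is no proof in the paper to compare it against: Theorem \ref{theorem1} is stated with a pointer to \cite{greco2011choquet} and never proved here, so your derivation fills a gap rather than paralleling one. The chain you propose is sound: the layer-cake (integral) form of the ordinary Choquet integral, which indeed agrees with (\ref{Choquet}) because the step function $t\mapsto\mu(\{i:y_i\geqslant t\})$ equals $\mu(N_i)$ on $\left]y_{(i-1)},y_{(i)}\right]$; the harmless extension of the upper limit from $\max_i x_i^r$ to $a_r-a_{r-1}$ (the integrand is $\mu_r(\emptyset)=0$ above $\max_i x_i^r$); the translation $t=s+a_{r-1}$; the three-case verification that $\{i:x_i^r\geqslant t-a_{r-1}\}=\{i:x_i\geqslant t\}$ on $\left]a_{r-1},a_r\right]$, which is indeed the crux; and the reassembly over the partition, valid up to the measure-zero set of breakpoints.

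The one thing you must not leave in a parenthesis is the normalisation $\alpha=0$, because it is not cosmetic: your own computation of the bottom layer $[\alpha,\min_i x_i[$ yields the contribution $\min_i x_i-\alpha$, so what you actually prove is
$$
\sum_{r=1}^{p}Ch(\mathbf{x}^{r},\mu_{r})={Ch}^{L}(\mathbf{x},\mu^{L})-\alpha ,
$$
and the identity claimed in the theorem holds if and only if $\alpha=0$. For $\alpha\neq 0$ the statement as transcribed in this paper (general domain $[\alpha,\beta]$ combined with the definition ${Ch}^{L}=\int_{\min}^{\max}\cdots\,dt+\min_i x_i$) fails by the additive constant $\alpha$: take $n=1$, $[\alpha,\beta]=[10,20]$, $p=1$; then ${Ch}^{L}(x,\mu^{L})=x$ while $Ch(x^{1},\mu_{1})=x-10$. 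In the source \cite{greco2011choquet} the evaluations live in $[0,1]$, so $\alpha=0$ and the theorem is correct there; in the present paper's generality the right statement is ${Ch}^{L}(\mathbf{x},\mu^{L})=\alpha+\sum_{r}Ch(\mathbf{x}^{r},\mu_{r})$. So either state the hypothesis $\alpha=0$ up front, or prove the corrected identity; with that single repair your proof is complete.
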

\noindent Consequently, the level dependent Choquet integral can be written as

\begin{equation}\label{ILDCh}
{Ch}^{L}(\mathbf{x},\mu^{L})=\displaystyle \sum_{r=1}^{p}Ch({\mathbf{x}^{r},\mu_{r}})=\displaystyle\sum_{r=1}^{p}\sum_{i=1}^{n}\mu_{r}(N^{r}_{i})\left[{x}^{r}_{(i)}-{x}^{r}_{(i-1)}\right],
\end{equation}

\noindent where ${(\cdot)}$ stands for a permutation of the indices of criteria so that $0=x^{r}_{(0)}\leqslant\ldots\leqslant x^{r}_{(i)}\leq\ldots\leqslant x^{r}_{(n)}$ and $N_{i}^{r}=\{j\in G: x^{r}_j\geqslant x^{r}_{(i)}\}$. 

It is interesting to observe that a result similar to the one shown in Theorem \ref{theorem1} was proposed in \cite{mesiar2015choquet} where the Choquet integral with respect to an interval level dependent capacity was formulated in terms of an ordinal sum of Choquet integrals based on capacities $\mu_r$. More precisely, on the basis of \cite{mesiar2000new}, consider 
\begin{itemize}
	\item the above partition of $\left[\alpha,\beta\right]$ obtained with $\alpha=a_{0}<a_{1}<\ldots<a_{p-1}<a_{p}=\beta$, 
\item $p$ aggregation functions $A_r, r=1,\ldots,p$, that are, functions $A_r:[a_{r-1},a_r]^n\rightarrow [a_{r-1},a_r]$  non decreasing in their arguments and such that $A_r(a_{r-1},\ldots,a_{r-1})=a_{r-1}$ and $A_r(a_{r},\ldots,a_{r})=a_{r}$, 
	\item an increasing bijection $\varphi:[\alpha,\beta]\rightarrow\mathbb{R}.$  
\end{itemize}
Then the $\varphi$-ordinal sum of aggregation functions $A_1,\ldots,A_p$ is given by 
\begin{equation}\label{Ordinal_sum}
\mathbf{A}\left(\varphi,A_1,\ldots,A_p;x_1,\ldots,x_n\right)=\varphi^{-1}\left(\displaystyle \sum_{r=1}^{p}\left[\varphi\left(A_r\left(\tilde{x}^r_1,\ldots,\tilde{x}_n^r\right)\right)-\varphi\left(a_{r-1}\right)\right]\right),
\end{equation}
with $\tilde{x}^r_i=max\left(a_r,min\left(a_{r-1},x_i\right)\right), i=1,\ldots,n$.  

\cite{mesiar2015choquet} proves that  
\begin{equation}\label{Ordinal_sum_ldc}
{Ch}^{L}(\mathbf{x},\mu^{L})=\mathbf{A}\left(id,Ch^1({\mathbf{x},\mu_{1}}),\ldots,Ch^p({\mathbf{x},\mu_{p})};x_1,\ldots,x_n\right)
\end{equation}
with $id(x)=x$ for all $x \in [\alpha,\beta]$ and $Ch^r({\mathbf{x},\mu_{r}})$ being the restriction of $Ch({\mathbf{x},\mu_{r}})$ to $[a_{r-1},a_r]^n$, that is,
\begin{equation}\label{Ordinal_sum_ldc_}
{Ch}^{L}(\mathbf{x},\mu^{L})=\displaystyle \sum_{r=1}^{p}\left(Ch\left(\mathbf{\tilde{x}^r},\mu_{r}\right)-a_{r-1}\right).
\end{equation}
The formulation (\ref{Ordinal_sum_ldc}) has the merit of expressing the level dependent Choquet integral based on an interval level dependent capacity in terms of ordinal sum which is an important issue originally proposed by Birkhoff \cite{birkhoff1940lattice} for posets and lattices and after applied for $t$-norms and  $t$-conorms \cite{klement2000triangular} and copulas \cite{nelsen2007introduction}. From a more operational point of view, formulations (\ref{ILDCh}) and (\ref{Ordinal_sum_ldc_}) permit to represent the level dependent Choquet integral based on an interval level dependent capacity in terms of usual Choquet integrals based on capacities $\mu_r, r=1,\ldots,p$. In fact the addends of the sum in (\ref{ILDCh}) and  (\ref{Ordinal_sum_ldc_}) are the same, that is
\begin{equation}\label{GMG_MS}
Ch({\mathbf{x}^{r},\mu_{r}})=Ch({\mathbf{\tilde{x}^r},\mu_{r}})-a_{r-1}, r=1,\ldots,n,
\end{equation}
because 
\begin{equation}\label{ETR}
\tilde{x}^r_i=x_i^r+a_{r-1}, r=1,\ldots,p, i=1,\ldots,n,
\end{equation}
and in consequence of the translation invariance of the Choquet integral, that is, 
\begin{equation}\label{TI}
Ch({\mathbf{x+c},\mu})=Ch({\mathbf{x},\mu})+c
\end{equation}
for all capacities $\mu$, all vectors $\mathbf{x} \in \mathbb{R}^n$ and all constants $c \in \mathbb{R}$ with $\mathbf{c}=[c, \ldots, c]\in \mathbb{R}^n$. Indeed, by (\ref{ETR}) and (\ref{TI}) one can get  
\begin{equation}\label{GMG_MS_}
Ch({\mathbf{\tilde{x}^r},\mu_{r}})=Ch({\mathbf{x}^{r}+\mathbf{a_{r-1}},\mu_{r}})=Ch({\mathbf{x}^{r},\mu_{r}})+a_{r-1}, r=1,\ldots,n,
\end{equation}
and, consequently, (\ref{GMG_MS}).
%
%
%
%
%
%
%

%


\subsubsection{M\"{o}bius transform of level dependent capacity}\label{MobRep}
Given a level dependent capacity $\mu^{L}$, its M\"{o}bius transform is a function $m^{L}:2^{G}\times(\alpha, \beta)\rightarrow \mathbb{R}$, such that, for all $E\subseteq G$ and for all $t \in [\alpha,\beta]$ 

\begin{equation*}
\mu^{L}(E,t)=\displaystyle\sum_{B\subseteq E}m^{L}(B,t).
\end{equation*}

\noindent In terms of M\"{o}bius transform, properties \textbf{1.a}) and \textbf{2.a}) are restated  as:

\begin{description}
  \item \textbf{1.b}) $\displaystyle m^{L}(\emptyset,t)=0,\,\,\,\sum_{E\subseteq G}m^{L}(E,t)=1,\;\forall t\in [\alpha, \beta]$,
  \item \textbf{2.b}) $\displaystyle \forall \: i \in G\;\mbox{and}\;\forall E\subseteq G\setminus\left\{i\right\},\; \sum_{R\subseteq E}\:m^{L}(R\cup\left\{i\right\},t)\geqslant 0,\;\forall t\in [\alpha,\beta]$.
\end{description}

Considering an interval level dependent capacity where $a_0,a_1,\ldots,a_p$ are the elements decomposing the interval $[\alpha,\beta]$ and $m_r$ the M\"{o}bius transform of the capacities $\mu_r$, $r=1,\ldots,p,$ the level dependent Choquet integral (\ref{ILDCh}) can be written in the following way

\begin{equation*}
{Ch}^{L}(\mathbf{x},\mu^{L})=\displaystyle \sum_{r=1}^{p}Ch({\mathbf{x}^{r},\mu_{r}})=\displaystyle\sum_{r=1}^{p}\left[\sum_{E\subseteq G}m_r(E)\;\displaystyle\min_{i\in E} x_{i}^{r}\right]
\end{equation*}

\noindent where the $x^{r}_{i},\; i=1,\ldots,n$ have been defined above.

\subsubsection{$k$-additive interval level dependent capacity}\label{KOrdLevDepCap}
To apply in a more manageable way the level dependent Choquet integral, similarly to what has been done for a capacity, $k$-order additive interval level dependent capacities can be defined. An interval level dependent capacity $\mu^{L}$ is said $k$-additive $(1\leqslant k\leqslant n-1)$ if, for all $t \in [\alpha, \beta]$, $m^{L}(E,t)=0$ for all $E \subseteq G$ such that $|E|>k$. Since, as observed above, giving a weight to each criterion and to each pair of criteria is enough to represent the preferences of the DM, 2-additive interval level dependent capacities are used in practice. The value that a $2$-additive interval level dependent capacity $\mu^{L}$ assigns to a set $E\subseteq G$ can be expressed in terms of its M\"{o}bius transform as follows: $\forall E\subseteq G\;\mbox{and}\;\forall t\in [\alpha,\beta]$

\begin{equation*}
\mu^{L}(E,t)=\underset{i\in E}{\sum }m^{L}\left( \left\{ i\right\},t \right) +\underset{\left\{ i,j\right\}
\subseteq E}{\sum }m^{L}\left( \left\{ i,j\right\},t \right).
\end{equation*}

With regard to 2-additive interval level dependent capacities, properties \textbf{1.b)} and \textbf{2.b)} can be restated as follows:

\begin{description}
\item[1.c)] $\displaystyle m^{L} \left( \emptyset,t \right) =0,\,\,\,\sum_{i\in G}m^{L}\left(\left\{ i\right\},t \right) +\sum_{\left\{ i,j\right\}\subseteq G}m^{L}\left(\left\{ i,j\right\},t \right) =1,$
\item[2.c)] $
\left\{
\begin{array}{l}
\displaystyle m^{L}\left( \left\{ i\right\},t \right) \geqslant 0,\; \forall i \in G,\\[2mm]
\displaystyle m^{L}\left( \left\{ i\right\},t \right) +{\sum_{j\in E}}\:m^{L}\left( \left\{ i,j\right\},t \right) \geqslant 0, \; \forall i\in G
\:\mbox{and}\:\forall\: E\subseteq G \setminus \left\{i\right\},$ $E\neq\emptyset.
\end{array}
\right.
$
\end{description}

\begin{prop}\label{kAdditM}
Given
\begin{itemize}
\item an interval level dependent capacity $\mu^{L}$ over $2^{G}\times [\alpha,\beta]$, with $a_0,\ldots,a_p$ being a partition of the interval $[\alpha,\beta]$ and $\mu_r$ the corresponding capacities, $r=1,\ldots,p,$
\item $m^{L}$ and $m_{r}$ the M\"{o}bius transforms of $\mu^{L}$ and $\mu_{r}$, respectively, 
\end{itemize}
then $\mu^{L}$ is a $k$-additive interval level dependent capacity if and only if all $\mu_r$ are $k$-additive, $r=1,\ldots,p$.
\end{prop}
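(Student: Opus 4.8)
The plan is to reduce everything to the uniqueness of the M\"{o}bius transform, exploited pointwise in the level $t$. Recall that the defining relation $\mu(E)=\sum_{B\subseteq E}m(B)$ can be inverted by the M\"{o}bius inversion formula $m(E)=\sum_{B\subseteq E}(-1)^{|E\setminus B|}\mu(B)$, so that a capacity and its M\"{o}bius transform determine each other bijectively. The essential point is that for an interval level dependent capacity this inversion is carried out separately for each fixed $t\in[\alpha,\beta]$, and on each subinterval the set function $\mu^{L}(\cdot,t)$ is literally one of the $\mu_r$.

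First I would establish the pointwise identity $m^{L}(E,t)=m_{r}(E)$ for every $E\subseteq G$ and every $t\in[a_{r-1},a_r[$ (with the convention that the last interval is closed). By definition of an interval level dependent capacity, $\mu^{L}(B,t)=\mu_{r}(B)$ for all $B\subseteq G$ whenever $t$ lies in the $r$-th subinterval. Applying M\"{o}bius inversion at this fixed $t$, whose right-hand side involves only the values $\mu^{L}(B,t)=\mu_{r}(B)$, gives
$$
m^{L}(E,t)=\sum_{B\subseteq E}(-1)^{|E\setminus B|}\mu^{L}(B,t)=\sum_{B\subseteq E}(-1)^{|E\setminus B|}\mu_{r}(B)=m_{r}(E).
$$
Equivalently, one may simply note that $m_{r}$ satisfies $\sum_{B\subseteq E}m_{r}(B)=\mu_{r}(E)=\mu^{L}(E,t)$ for all $E$, so by uniqueness $m^{L}(\cdot,t)$ must coincide with $m_{r}$.

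With this identity in hand the equivalence is immediate in both directions. By definition, $\mu^{L}$ is $k$-additive if and only if $m^{L}(E,t)=0$ for all $t\in[\alpha,\beta]$ and all $E\subseteq G$ with $|E|>k$. Since every $t\in[\alpha,\beta]$ belongs to exactly one subinterval $[a_{r-1},a_r[$ (the last one closed), and there $m^{L}(E,t)=m_{r}(E)$, the displayed vanishing condition holds for all such $t$ precisely when $m_{r}(E)=0$ for every $r=1,\ldots,p$ and every $E$ with $|E|>k$; that is exactly the statement that each $\mu_{r}$ is $k$-additive. Reading the equivalence left to right gives that $k$-additivity of $\mu^{L}$ forces each $\mu_{r}$ to be $k$-additive, and reading it right to left gives the converse.

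There is no genuine analytic difficulty here; the whole content is the pointwise-in-$t$ character of M\"{o}bius inversion. The only points demanding a little care are bookkeeping ones: making sure the partition convention assigns each breakpoint $a_{r-1}$ to a single subinterval (so that $m^{L}(\cdot,t)$ is unambiguously defined and equals the correct $m_{r}$ at those finitely many levels), and checking that the slight discrepancy between the domain of $m^{L}$ and the range of $t$ quantified in the $k$-additivity definition is immaterial, since at each level $t$ the value $\mu^{L}(\cdot,t)$ still coincides with one of the $\mu_{r}$. I expect these to be the only, and purely formal, obstacles.
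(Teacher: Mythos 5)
Your proof is correct and is essentially the argument the paper intends: the paper's own proof is just the one-line remark that the result is ``a straightforward consequence of the definitions of $k$-additive capacities and $k$-additive interval level dependent capacities,'' and your pointwise-in-$t$ M\"{o}bius inversion (giving $m^{L}(\cdot,t)=m_{r}$ on each subinterval, hence the equivalence of the two vanishing conditions) is exactly the straightforward argument being alluded to, written out in full.
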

\begin{proof}
It is a straightforward consequence of the definitions of $k$-additive capacities and $k$-additive interval level dependent capacities.
\end{proof}

Considering Proposition \ref{kAdditM}, if $\mu^{L}$ is $2$-additive, then the level dependent Choquet integral may be also reformulated as follows:

\begin{equation*}
{Ch}^{L}(\mathbf{x},\mu^{L})=\displaystyle\sum_{r=1}^{p}Ch(\mathbf{x}^{r},\mu_r)=\displaystyle\sum_{r=1}^{p}\left[\sum_{i\in G}m_r(\{i \})x^{r}_{i} + \sum_{\{i,j\}\subseteq G}m_r(\{i,j\})\displaystyle\min \{x^{r}_{i},x^{r}_{j} \}\right]
\end{equation*}

\noindent where the $x_i^{r}$ , $i = 1, . . . , n$, have been defined above.

\subsubsection{Importance indices}\label{ImpIndices}
Taking into account a capacity, the importance of a criterion $i$ is not dependent on itself only but also on its contribution to all possible coalitions of criteria. For this reason, the Shapley index, defining the importance of a criterion $i$, has been introduced in \cite{shapley}. In case of a level dependent capacity, the importance of a criterion $i$ may (but has not to) vary when the level $t$ is changed. Consequently, given a generalized capacity $\mu^{L}$, we have that \cite{greco2011choquet}:

\begin{equation}\label{ShapleyGeneralizedCapacity}
\phi(\mu^{L},i,t)=\displaystyle \sum_{E\subseteq G\setminus\{i\}}\frac{|E|!(|G\setminus E|-1)!}{|G|!}[\mu^{L}(E\cup \{i \},t)-\mu^{L}(E,t)].
\end{equation}
Considering the M\"{o}bius transform $m^{L}$ of $\mu^{L}$, the importance index $\phi(\mu^{L},i,t)$ can be rewritten as
\begin{equation}\label{ShapleyGeneralizedMobius}
\phi(\mu^{L},i,t)=\displaystyle \sum_{E\subseteq G\setminus \{i\}} \frac{m^{L}(E\cup \{i \},t)}{|E\cup\{i\}|}.
\end{equation}
Finally, if $\mu^{L}$ is an interval level dependent capacity, then 
\begin{equation}\label{ShapleyIntLD}
\phi(\mu^{L},i,t)=\sum_{E\subseteq G\setminus\{i\}}\frac{m_r(E\cup\{i\})}{|E\cup\{i\}|},\;\;\forall t\in[a_{r-1},a_r[.
\end{equation}

From (\ref{ShapleyGeneralizedCapacity})-(\ref{ShapleyIntLD}), the comprehensive importance of criterion $i$ can be obtained as \cite{greco2011choquet}:

$$
\phi(\mu^{L},i)=\frac{\int_{\alpha}^{\beta}\phi(\mu^{L},i,t)\;dt}{\beta-\alpha}.
$$

In particular, for an interval level dependent capacity, the importance of criterion $i$ on the interval $[a_{r-1},a_r[$ is given by 

$$
\phi(\mu^{L},i,[a_{r-1},a_r[)=\displaystyle\sum_{E\subseteq G\setminus\{i\}}\frac{m_r(E\cup\{i\})}{|E\cup\{i\}|}
$$

\noindent and, consequently,

$$
\phi(\mu^{L},i)=\displaystyle\sum_{r=1}^{p}\left\{\sum_{E\subseteq G\setminus\{i\}}\frac{m_r(E\cup\{i\})}{|E\cup\{i\}|}\cdot\frac{a_r-a_{r-1}}{\beta-\alpha}\right\}.
$$

\noindent For a $2$-additive interval level dependent capacity, the previous index can be rewritten as 

\begin{equation*}
\phi(\mu^{L},i)=\displaystyle\sum_{r=1}^{p}\left\{\left[m_r(\{i\})+\displaystyle\sum_{j\in G\setminus\{i\}}\frac{m_r(\{i,j\})}{2}\right]\cdot\frac{a_r-a_{r-1}}{\beta-\alpha}\right\}.
\end{equation*}

\subsubsection{Interaction indices}\label{IntInd}
With respect to the Choquet integral, interaction indices have been introduced by Murofushi and Soneda \cite{Murofushi1993} considering only couples of criteria, and by Grabisch \cite{grabisch1997k} with respect to all possible subsets of criteria. \\

\noindent Given a level dependent capacity $\mu^{L},\; E\subseteq G$ and $t \in [\alpha, \beta]$, the interaction index can be computed as \cite{greco2011choquet}

\begin{equation*}
I(\mu^{L},E,t)=\displaystyle \sum_{B\subseteq G\setminus E} \frac{|B|!(|G|-|B|-|E|)!}{|G|!}\left(\displaystyle \sum_{C\subseteq E}(-1)^{\left|E\setminus C\right|}\mu^{L}(C\cup B,t)\right).
\end{equation*}
The interaction index $I(\mu^{L},E,t)$ can be represented in terms of M\"{o}bius transform as follows:

\begin{equation*}
I(\mu^{L},E,t)=\displaystyle \sum_{B\subseteq G\setminus E} \frac{m^{L}(E\cup B,t)}{|B|+1}.
\end{equation*}

\noindent If $\mu^{L}$ is an interval level dependent capacity, then 

\begin{equation*}
I(\mu^{L},E,t)=\displaystyle \sum_{B\subseteq G\setminus E}\frac{m_r(E\cup B)}{|B|+1},\;\;\forall t\in[a_{r-1},a_r[.
\end{equation*}

\noindent As a consequence, the importance of criteria in $E$ for the interval $[a_{r-1},a_r[$ is 

$$
I(\mu^{L},E,[a_{r-1},a_r[)=\displaystyle\sum_{B\subseteq G\setminus E}\frac{m_r(E\cup B)}{|B|+1}
$$

\noindent and, therefore, 

\begin{equation}\label{IntIndIntLD}
I(\mu^{L},E)=\displaystyle\sum_{r=1}^{p}\left\{\sum_{B\subseteq G\setminus E}\frac{m_r(E\cup B)}{|B|+1}\cdot\frac{a_{r}-a_{r-1}}{\beta-\alpha}\right\}.
\end{equation}

If $E=\{i,j\}$ and $\mu^{L}$ is a 2-additive interval level dependent capacity, then (\ref{IntIndIntLD}) can be rewritten as 

\begin{equation*}
I(\mu^{L},\{i,j\})=\sum_{r=1}^{p}\left\{m_r(\{i,j\})\cdot\frac{a_r-a_{r-1}}{\beta-\alpha}\right\}.
\end{equation*}

\section{NAROR and SMAA applied to the level dependent Choquet integral preference model}\label{NARSMLevDep}
In this section we shall describe the extension of NAROR and SMAA to the level dependent Choquet integral. 
\subsection{Non Additive Robust Ordinal Regression (NAROR)}\label{NAROR_des}
Every Multiple Criteria Decision Aiding (MCDA) method needs some parameters to be implemented \cite{GreFigEhr}. In particular, when the parameters related to the DM's preferences have to be set, it can be chosen a direct or an indirect technique to get them. In the first case, the DM provides all the values of the parameters requested by the method. In the second case, the DM provides only some statements which are subsequently used to obtain the parameters requested by the model.

The NAROR \cite{angilella2010non} belongs to the family of ROR methods (see \cite{CGKSml,CGKSen,greco2008ordinal}) and, as any ROR method, it is based on the indirect preference information.\\
Regarding this preference information, the DM can provide (among parenthesis the constraints translating the corresponding piece of preference information):

\begin{itemize}
	\item a possibly incomplete binary relation $\succsim^{DM}$ on a subset of alternatives $A^{*}\subseteq A$ for which $a\succsim^{DM}b$ iff $a$ is at least as good as $b$ $\left(Ch^{L}(\mathbf{a},\mu^L)\geqslant Ch^{L}(\mathbf{b},\mu^L)\right)$, 
	\item a possibly incomplete binary relation $\succsim^{DM}$ on a subset of criteria $G^{*}\subseteq G$, for which $g_i\succsim^{DM}g_j$ iff $g_i$ is at least as important as $g_j$ $\left(\phi(\mu^{L},i)\geqslant \phi(\mu^{L},j)\right)$, 
	\item a possibly incomplete binary relation $\succsim^{DM}_{[a_{r-1},a_{r}[}$ on a subset of criteria $G^{*}\subseteq G$, for which \\ $g_i\succsim_{[a_{r-1},a_{r}[}^{DM}g_j$ iff, considering the evaluations in the interval $[a_{r-1},a_r[$, $g_i$ is at least as important as $g_j$  $\left(\phi(\mu^{L},i,[a_{r-1},a_r[)\geqslant \phi(\mu^{L},j,[a_{r-1},a_r[)\right)$\footnote{On the basis of $\succsim^{DM}$, a strict preference relation $\succ^{DM}$ and an indifference relation $\sim^{DM}$ can be defined such that $a\succ^{DM}b$ iff $a\succsim^{DM}b$ and $not(b\succsim^{DM}a)$, while $a\sim^{DM}b$ iff $a\succsim^{DM}b$ and $b\succsim^{DM}a$. Analogously, on the basis of $\succsim_{[a_{r-1},a_{r}[}^{DM}$, the relations $\succ_{[a_{r-1},a_{r}[}^{DM}$ and $\sim_{[a_{r-1},a_{r}[}^{DM}$ can be defined.} ,
	\item positive or negative interactions between criteria $g_i$ and $g_j$ in a comprehensive way ($I(\mu^{L},\{i,l\})>~0$ if $g_i$ and $g_j$ are positively interacting or $I(\mu^{L},\{i,j\})<0$ if $g_i$ and $g_j$ are negatively interacting) or considering a particular interval of evaluations $\left(I(\mu^{L},\{i,j\},[a_{r-1},a_r[)\right.>0$ if $g_i$ and $g_j$ are positively interacting or $I(\mu^{L},\{i,j\},[a_{r-1},a_r[)<0$ if $g_i$ and $g_j$ are negatively interacting).
\end{itemize}

Considering a 2-additive interval level dependent capacity $\mu^{L}$ relative to the breakpoints $a_0,a_1,\ldots,a_p$ and the M\"{o}bius transform $m_{r}$ of the capacities $\mu_r$, $r=1,\ldots,p$, to check if there exists at least one interval level dependent Choquet integral compatible with the preferences provided by the DM, the following LP problem has to be solved:

\begin{equation}\label{LPExistence}
\begin{array}{l}
\;\;\;\;\varepsilon^{*}=max\;\varepsilon,\;\mbox{subject to},\\[2mm]
\left.
\begin{array}{l}
\;\;Ch^{L}(\mathbf{a},\mu^L)\geqslant Ch^{L}(\mathbf{b},\mu^L)+\varepsilon, \;\mbox{if}\; a\succ^{DM}b,\\[2mm]
\;\;Ch^{L}(\mathbf{a},\mu^L)=Ch^{L}(\mathbf{b},\mu^L), \;\mbox{if}\; a\sim^{DM}b,\\[2mm]
\;\phi(\mu^{L},i)\geqslant\phi(\mu^{L},j)+\varepsilon, \;\mbox{if}\; g_i\succ^{DM}g_j,\\[2mm]
\;\phi(\mu^{L},i)=\phi(\mu^{L},j), \;\mbox{if}\; g_i\sim^{DM}g_j,\\[2mm]
\;\phi(\mu^{L},i,[a_{r-1},a_r[)\geqslant\phi(\mu^{L},j,[a_{r-1},a_r[)+\varepsilon, \;\mbox{if}\; g_i\succ_{[a_{r-1},a_r[}^{DM}g_j,\\[2mm]
\;\phi(\mu^{L},i,[a_{r-1},a_r[)=\phi(\mu^{L},j,[a_{r-1},a_r[), \;\mbox{if}\; g_i\sim^{DM}_{[a_{r-1},a_r[}g_j,\\[2mm]
\; I(\mu^{L},\{i,j\})\geqslant\varepsilon \;\;\mbox{if $g_{i}$ and $g_j$ are comprehensively positively interacting},\\[2mm]
\; I(\mu^{L},\{i,j\},[a_{r-1},a_r[)\geqslant\varepsilon \;\;\mbox{if $g_{i}$ and $g_j$ are positively interacting considering}\\[2mm]
\;\;\;\;\;\;\mbox{evaluations in the interval $[a_{r-1},a_r[$},\\[2mm]
\; I(\mu^{L},\{i,j\})\leq-\varepsilon \;\;\mbox{if $g_{i}$ and $g_j$ are comprehensively negatively interacting},\\[2mm]
\; I(\mu^{L},\{i,j\},[a_{r-1},a_r[)\leq-\varepsilon \;\;\mbox{if $g_{i}$ and $g_j$ are negatively interacting considering}\\[2mm]
\;\;\;\;\;\;\mbox{evaluations in the interval $[a_{r-1},a_r[$},\\[2mm]
\left.
\begin{array}{l}
\displaystyle m_r\left(\emptyset\right)=0,\,\,\,\sum_{i\in G}m_r\left(\left\{ i\right\}\right) +\sum_{\left\{ i,j\right\}\subseteq G}m_r\left(\left\{i,j\right\}\right)=1,\\[2mm]
\displaystyle m_r\left(\left\{i\right\}\right)\geqslant 0,\; \forall i \in G,\\[2mm]
\displaystyle m_r\left(\left\{i\right\}\right)+{\sum_{j\in E}}\:m_r\left(\left\{i,j\right\}\right)\geqslant 0, \; \forall i\in G \:\mbox{and}\:\forall\: E\subseteq G \setminus \left\{i\right\},$ $E\neq\emptyset. \\[2mm]
\end{array}
\right\}\forall r=1,\ldots,p,\\[2mm]
\end{array}\right\}E^{DM}\\
\end{array}
\end{equation}

If $E^{DM}$ is feasible and $\varepsilon^{*}>0$, then there exists at least one interval level dependent capacity compatible with the preferences provided by the DM, otherwise, there is not any interval level dependent capacity compatible and, therefore, the constraints causing this incompatibility can be identified by using one of the methods presented in \cite{mousseau2003resolving}. 

In general, if there exists one interval level dependent capacity compatible with the preferences provided by the DM, there exists more than one and, therefore, the choice of only one of them can be considered arbitrary to some extent. For this reason, ROR takes into account all of them defining a necessary $\succsim^N$ and a possible $\succsim^P$ preference relation. The necessary preference relation holds between two alternatives $a$ and $b$ if $a$ is at least as good as $b$ for all compatible models, while the possible preference relation holds between $a$ and $b$ if $a$ is at least as good as $b$ for at least one compatible model. \\
From a computational point of view, the two preference relations can be computed as follows: 
\begin{itemize}
\item $a\succsim^{N}b$ iff $E^{N}(a,b)$ is infeasible or $\varepsilon^{N}\leq 0$ where $\varepsilon^{N}=\max\varepsilon$ subject to $E^{N}(a,b)$ and 
$$
E^{N}(a,b)=\{Ch^{L}(\mathbf{b},\mu^L)\geqslant Ch^{L}(\mathbf{a},\mu^L)+\varepsilon\}\cup E^{DM},
$$
\item $a\succsim^{P}b$ iff $E^{P}(a,b)$ is feasible and $\varepsilon^{P}>0$ where $\varepsilon^{P}=\max\varepsilon$ subject to $E^{P}(a,b)$ and 
$$
E^{P}(a,b)=\{Ch^{L}(\mathbf{a},\mu^L)\geqslant Ch^{L}(\mathbf{b},\mu^L)\}\cup E^{DM}.
$$
\end{itemize}

\subsection{Stochastic Multicriteria Acceptability Analysis (SMAA)}\label{SMAA_des}
SMAA  ~\cite{Lahdelma,Lahdelma_S2} is a family of MCDA methods which take into account uncertainty or imprecision on the evaluations and preference model parameters. In this section we describe SMAA-2 ~\cite{Lahdelma_S2}, since its underlying preference model is a value function and the Choquet integral preference model belongs to this family. In order to apply SMAA to the level dependent Choquet integral, we shall denote by $\chi$ the evaluation space and by ${\cal M}$ the set of all interval level dependent capacities satisfying the constraints in $E^{DM}$. 

As ROR methods, SMAA methods take into account all the models compatible with the preferences provided by the DM even if in a different way. The indirect preference information is composed of two probability distributions, $f_{\chi}$ and $f_{{\cal M}}$, defined on $\chi$ and ${\cal M}$, respectively. \\
Given $\xi\in\chi$ and $\mu^L\in{\cal M}$, SMAA methods define the rank function 

$$rank(a,\xi,\mu^{L})=1+\sum_{b\in A\setminus\{a\}}\rho\left(Ch^{L}(\xi_b,\mu^L)>Ch^{L}(\xi_a,\mu^{L})\right),$$

\noindent (where $\rho(false)=0$ and $\rho(true)=1$ and $\xi_a$ are the evaluations of $a$ in the matrix $\xi$) that gives the rank position of alternative $a$. On the basis of this rank function, SMAA-2 computes the set of interval level dependent capacities ${\cal M}^{s}(a,\xi)\subseteq{\cal M}$ for which alternative $a$ assumes rank $s=1,2,\ldots,|A|$, as follows:

$${\cal M}^{s}(a,\xi)=\left\{\mu^{L}\in{\cal M}: rank(a,\xi,\mu^L)=s\right\}.$$

\noindent The following indices are therefore computed in SMAA-2:

\begin{itemize}
\item \textit{The rank acceptability index} that measures  the variety of different parameters compatible with the DM's preference information giving to $a$ the rank $s$:

$$
b^{s}(a)=\int_{\xi\in \chi}f_{\chi}(\xi)\int_{\mu^{L}\in{\cal M}^{s}(a,\xi)}f_{{\cal M}}(\mu^{L})\;d\mu^L d\xi;
$$

$b^{s}(a)$ gives the probability that $a$ has rank $s$, and it is within the range $[0,1]$;

\item \textit{The pairwise winning index} \cite{leskinen2006alternatives} that is defined as the frequency with which an alternative $a$ is preferred to an alternative $c$ in the space of the interval level dependent capacities:

$$
p(a,c)=\int_{\mu^L\in {\cal M}} f_{\cal M}(\mu^L) \int_{{\substack{\xi\in \chi: \;Ch^L\left(\xi_{a},\mu^L\right)> Ch^L\left(\xi_{c},\mu^L\right)}}} f_\chi(\xi) \;d\xi d\mu^L.
$$ 

\end{itemize}


\noindent From a computational point of view, the multidimensional integrals defining the considered indices are estimated by using the Monte Carlo method. In particular, since the preference information provided by the DM and translated into constraints in $E^{DM}$ defines a convex space of parameters (interval level dependent capacities), the Hit-And-Run (HAR) method can be used to sample several of these sets of parameters \cite{smith1984,Tervonen2012EJOR,Tervonen2014}. The application of the level dependent Choquet integral to each of these sets of parameters produces a ranking of the alternatives at hand. Consequently, considering all these rankings simultaneously, the rank acceptability index, as well as the pairwise winning indices described above, can be computed for each alternative and for each pair of alternatives, respectively.

Let us conclude this section by observing that the proposed method could be also used to explain a full ranking provided by the DM and taking into account some preferences on the importance and the interaction between criteria he wishes to express. While, in general, the information provided by the DM regards a set composed of few alternatives and on the basis of this preference information we aim to give recommendations on the whole set of alternatives at hand, in some cases the DM has a clear idea of the ranking of the alternatives and he wishes ``only" to justify them. Technically, his preferences are represented by the importance assigned to the criteria and to the possible interactions between criteria. From a formal point of view, the full ranking of the alternatives given by the DM is translated into inequality constraints between consecutive alternatives in the ranking as described in Section 4.1. For example, if the alternatives at hand are denoted by $a,b,c,d$ and the DM is convinced that they are ranked in this order, then it is necessary to add the following three inequalities: $Ch^{L}(\textbf{a},\mu^{L})\geqslant Ch^{L}(\textbf{b},\mu^{L})+\varepsilon$, $Ch^{L}(\textbf{b},\mu^{L})\geqslant Ch^{L}(\textbf{c},\mu^{L})+\varepsilon$ and $Ch^{L}(\textbf{c},\mu^{L})\geqslant Ch^{L}(\textbf{d},\mu^{L})+\varepsilon$. If there is not any set of parameters compatible with this full preference, then, this means that it is inconsistent. One can therefore check for the cause of the inconsistency as explained in Section 4.1. If, instead, there is at least one set of parameters for which the alternatives' ranking is restored, then there could exist more than one. The application of NAROR and SMAA in this situation is meaningless. Indeed, since all vectors of parameters provide exactly the same ranking (being the one provided by the DM on the whole set of alternatives) the necessary preference relation will give back a total order of the alternatives being the same total order of alternatives the DM already gave. Each alternative $a$ will get $b^{s}(a)=100\%$ in correspondence of the the position $s$ it filled in the ranking given by the DM and, finally, one between $p(a,c)$ and $p(c,a)$ is equal to 100\%, depending on the fact that the DM stated that $a$ is preferred to $c$ or that $c$ is preferred to $a$. Therefore, we could sample a set of vectors of parameters compatible with the preferences provided by the DM and, then, computing its barycenter obtained by averaging, component by component, all sampled vectors. On the basis of the values in the barycenter, one can compute the Shapley indices and the interaction indices giving, therefore, information on the ranking of the criteria with respect to their importance as well as on the possible positive or negative interactions between criteria that could vary from one level of evaluations to the other. Let us observe that the ranking provided by the DM could admit some ex-aequo between the alternatives filling the same position. In this case, the weak preference of $a$ over $c$ is translated into the constraint $Ch^{L}(\textbf{a},\mu^{L})\geqslant Ch^{L}(\textbf{c},\mu^{L})$ (without including $\varepsilon$) and the two pairwise winning indices $p(a,c)$ and $p(c,a)$ would be equal to zero.

\section{Ordinal regression considering level dependent capacities changing with continuity }\label{Continuity}

Until now we have considered interval level dependent capacities, that is, level dependent capacities that change with a jump from one interval of evaluations to another but remaining constant within each single interval. Let us observe that this could be an acceptable working hypothesis for real life decision problems in which importance and interaction of criteria change from one interval of evaluations to another. However, it is also reasonable assuming that the importance and the interaction between criteria and, consequently, the level dependent capacities, change with continuity inside each interval of evaluations of considered criteria. Going more in depth in this case, in the following we present a simple and manageable procedure permitting to elicit a level dependent capacity $\mu^L(\cdot,t)$ changing with continuity with respect to $t$. The continuous level dependent capacities that we consider are related again to the partition of the interval $[\alpha,\beta]$ based on the finite sequence $\alpha=a_{0},a_{1},\ldots, a_{p-1},a_{p}=\beta$, that is, the same partition we have considered for the interval level dependent capacities. More precisely, the new continuous level dependent capacity will be defined once the $p+1$ capacities $\mu^L(\cdot,a_0), \mu^L(\cdot,a_1), \ldots,\mu^L(\cdot,a_p)$ are known. These capacities can be induced, for example, from the preference information provided by the DM. For all $E \subseteq G$ and for all $t \in [\alpha,\beta]\setminus\{a_{0},a_{1},\ldots, a_{p-1},a_{p}\}$  we compute $\mu^L(E,t)$ as a linear interpolation between $\mu^L(E,a_{\underline{r}(t)})$ and $\mu^L(E,a_{\underline{r}(t)+1})$, where $\underline{r}(t)=max\{r \in \{0,1,\ldots,p\}: a_r <t\}$,   as follows:   
\begin{equation*} 
\mu^L(E,t)=\mu^L(E,a_{\underline{r}(t)})+\frac{t-a_{\underline{r}(t)}}{a_{\underline{r}(t)+1}-a_{\underline{r}(t)}}\left[\mu^L(E,a_{\underline{r}(t)+1}) - \mu^L(E,a_{\underline{r}(t)})\right].
\end{equation*}
From the equation above we obtain, therefore,
\begin{equation}\label{continuous_capacities}
	\mu^L(E,t)=\frac{a_{\underline{r}(t)+1}-t}{a_{\underline{r}(t)+1}-a_{\underline{r}(t)}}\mu^L(E,a_{\underline{r}(t)})+\frac{t-a_{\underline{r}(t)}}{a_{\underline{r}(t)+1}-a_{\underline{r}(t)}}\mu^L(E,a_{\underline{r}(t)+1}).
\end{equation}
Consequently, the level dependent Choquet integral can be formulated in terms of the $p+1$ capacities $\mu^L(\cdot,a_0), \mu^L(\cdot,a_1), \ldots,\mu^L(\cdot,a_p)$ as follows. Reminding from \cite{greco2011choquet} that
\begin{equation*}
{Ch}^{L}(\mathbf{x},\mu^{L})=\displaystyle \int_\alpha^\beta\mu^L(\{g_i \in G: x_i \geqslant t\},t)dt=\sum_{i=1}^n\int_{x_{(i-1)}}^{x_{(i)}}\mu^L(\{g_i \in G: x_{i} \geqslant t\},t)dt
\end{equation*}
\noindent and observing that, for all $t \in [x_{(i-1)},x_{(i)}[$, 
\begin{equation*}
\{g_i \in G: x_i \geqslant t\}=\{g_i \in G:  x_i \geqslant x_{(i-1)}\}=N_{i}(\mathbf{x})
\end{equation*}
we can write
\begin{equation*}
{Ch}^{L}(\mathbf{x},\mu^{L})=\displaystyle \sum_{i=1}^n\int_{x_{(i-1)}}^{x_{(i)}}\mu^L(N_{i}(\mathbf{x}),t)dt.
\end{equation*}
The values of $\int_{x_{(i-1)}}^{x_{(i)}}\mu^L(N_{i}(\mathbf{x}),t)dt,$ $i=1,\ldots,n$, are computed considering the following two cases:
\begin{itemize}
	\item $\underline{r}(x_{(i-1)})=\underline{r}(x_{(i)})$: in this case, $x_{(i-1)}$ and $x_{(i)}$ belong to the same interval in the partition of $[\alpha,\beta]$. Therefore, $\int_{x_{(i-1)}}^{x_{(i)}}\mu^L(N_{i}(\mathbf{x}),t)dt$ is given by the area of the trapezoid $T$ in Figure \ref{PrimoTrapezio}, that is,
	
\begin{figure}[!h]
\begin{center}
\caption{The value $\int_{x_{(i-1)}}^{x^{(i)}}\mu^{L}(N_{i}(\mathbf{x},t))\;dt$ is given by the area of the trapezoid which vertices are $P_{2}$, $P_{3}$, $P_{5}$ and $P_6$. In this case, the evaluations $x_{(i-1)}$ and $x_{(i)}$ belong to the same interval $\left[a_{\underline{r}\left(x_{(i)}\right)},a_{\underline{r}\left(x_{(i)}\right)+1}\right]$. \label{PrimoTrapezio}}
\includegraphics[scale=0.3]{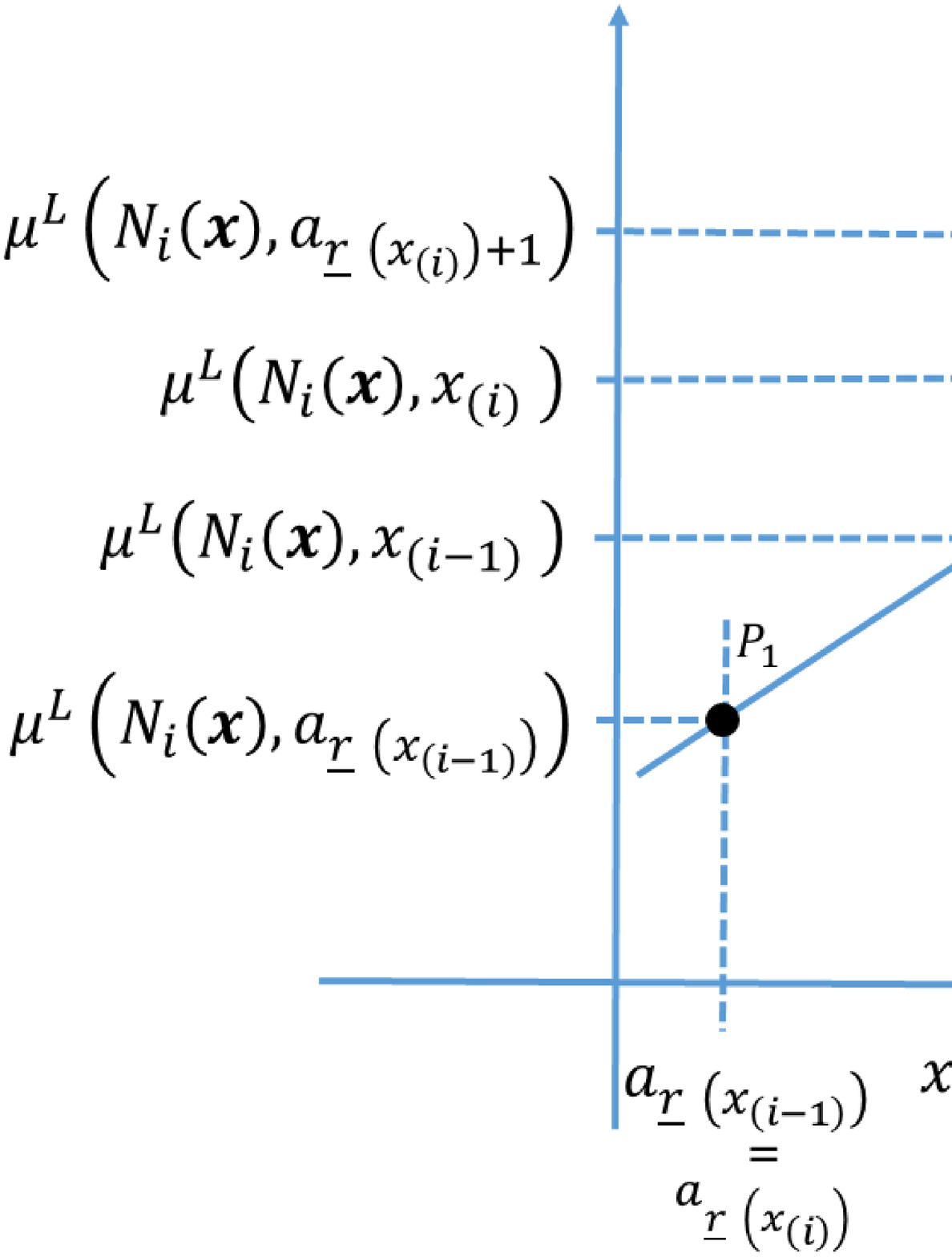}
\end{center}
\end{figure}
	
\begin{equation}\label{ILDChLR}
\int_{x_{(i-1)}}^{x_{(i)}}\mu^L(N_{i}(\mathbf{x}),t)dt=\frac{\left[\mu^L(N_{i}(\mathbf{x}),x_{(i-1)})+\mu^L(N_{i}(\mathbf{x}),x_{(i)})\right]\left(x_{(i)}-x_{(i-1)}\right)}{2}.
\end{equation}
By (\ref{continuous_capacities}) and (\ref{ILDChLR}), $\int_{x_{(i-1)}}^{x_{(i)}}\mu^L(N_{i}(\mathbf{x}),t)dt$  can be expressed in terms of capacities $\mu^L(\cdot,a_0),\ldots,\mu^L(\cdot,a_p)$, as follows:
\begin{eqnarray}\label{FirstTrapezoid}
\int_{x_{(i-1)}}^{x_{(i)}}\mu^L(N_{i}(\mathbf{x}),t)dt&=&\frac{\left[2a_{\underline{r}(x_{(i)})+1}-x_{(i-1)}-x_{(i)}\right]\left(x_{(i)}-x_{(i-1)}\right)}{2\left(a_{\underline{r}(x_{(i)})+1}-a_{\underline{r}(x_{(i)})}\right)}\mu^L\left(N_{i}(\mathbf{x}),a_{\underline{r}(x_{(i)})}\right)+ \nonumber\\
& + & \frac{\left[x_{(i-1)}+x_{(i)}-2a_{\underline{r}(x_{(i)})}\right]\left(x_{(i)}-x_{(i-1)}\right)}{2\left(a_{\underline{r}(x_{(i)})+1}-a_{\underline{r}(x_{(i)})}\right)}\mu^L\left(N_{i}(\mathbf{x}),a_{\underline{r}(x_{(i)})+1}\right);
\end{eqnarray}

\item $\underline{r}(x_{(i-1)})< \underline{r}(x_{(i)})$: in this case, $x_{(i-1)}$ and $x_{(i)}$ do not belong to the same interval in the partition of $\left[\alpha,\beta\right]$. Therefore, the value $\int_{x_{(i-1)}}^{x_{(i)}}\mu^L(N_{i}(\mathbf{x}),t)dt$ is given by the sum of area of trapezoids $T_{\underline{r}(x_{i-1})}, T_{\underline{r}(x_{i-1})+1}, \ldots, T_{\underline{r}(x_{i})}$, in Figure \ref{SecondoTrapezio}:

\begin{footnotesize}
\begin{equation*}
\int_{x_{(i-1)}}^{x_{(i)}}\mu^L(N_{i}(\mathbf{x}),t)dt=\frac{\left[\mu^{L}\left(N_{i}(\mathbf{x}),x_{(i-1)}\right)+\mu^{L}\left(N_{i}(\mathbf{x}),a_{\underline{r}\left(x_{(i-1)}\right)+1}\right)\right]\left(a_{\underline{r}\left(x_{(i-1)}\right)+1}-x_{(i-1)}\right)}{2}+
\end{equation*}
\begin{equation*}
+\sum_{r=\underline{r}(x_{(i-1)})+1}^{\underline{r}(x_{(i)})-1}\frac{\left[\mu^L(N_{i}(\mathbf{x}),a_r)+\mu^L(N_{i}(\mathbf{x}),a_{r+1})\right]\left(a_{r+1}-a_r\right)}{2}+\frac{\left[\mu^L(N_{i}(\mathbf{x}),a_{\underline{r}(x_{(i)})})+\mu^L(N_{i}(\mathbf{x}),x_{(i)})\right]\left(x_{(i)}-a_{\underline{r}(x_{(i)})}\right)}{2}.
\end{equation*}
\end{footnotesize}

\begin{figure}[!h]
\begin{center}
\caption{The value $\int_{x_{(i-1)}}^{x^{(i)}}\mu^{L}(N_{i}(\mathbf{x},t))\;dt$ is given by sum of the area of the trapezoids $T_{\underline{r}(x_{(i-1)})}, T_{\underline{r}(x_{(i-1)})+1}, \ldots, T_{\underline{r}(x_{(i)})}$. In this case, the evaluations $x_{(i-1)}$ and $x_{(i)}$ belong to different intervals in the defined partition of $\left[\alpha,\beta\right]$. \label{SecondoTrapezio}}
\includegraphics[scale=0.3]{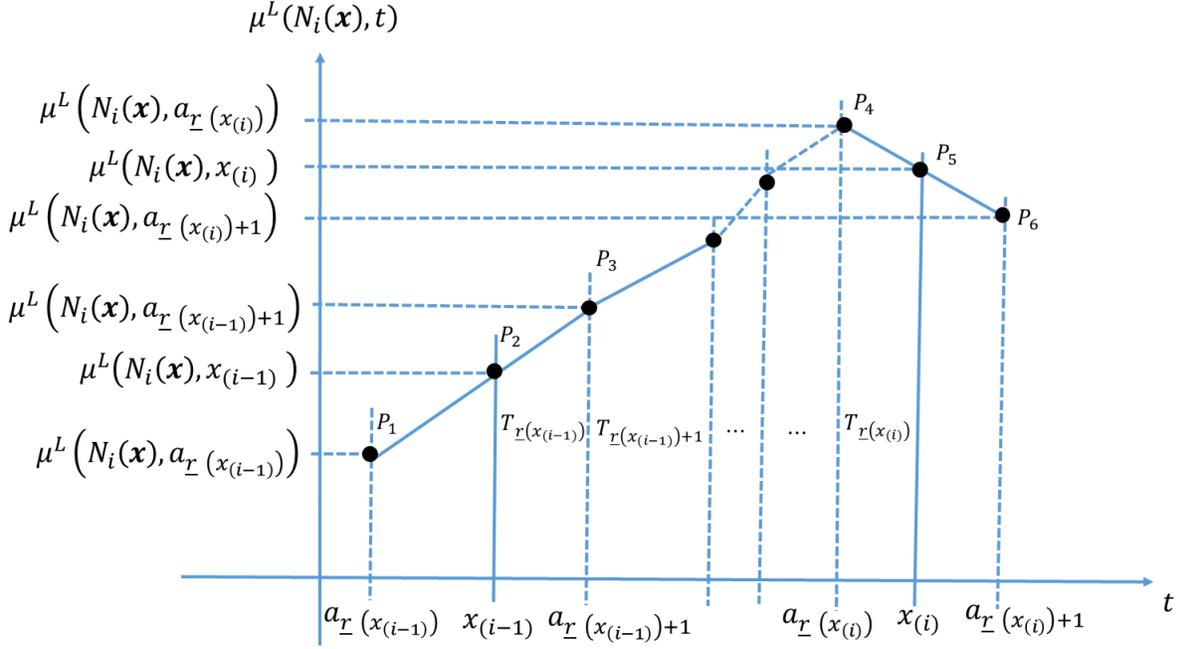}
\end{center}
\end{figure}

In this case, by (\ref{continuous_capacities}), $\int_{x_{(i-1)}}^{x_{(i)}}\mu^L(\{g_i \in G: x_i \geqslant t\},t)dt$  can be expressed in terms of capacities $\mu^L(\cdot,a_0),\ldots,\mu^L(\cdot,a_p)$, as follows: 

\begin{footnotesize}
$$
\int_{x_{(i-1)}}^{x_{(i)}}\mu^L(\{g_i \in G: x_i \geqslant t\},t)dt=\frac{\left(a_{\underline{r}(x_{(i-1)})+1}-x_{(i-1)}\right)^2}{2\left(a_{\underline{r}(x_{(i-1)}+1)}-a_{\underline{r}(x_{(i-1)})}\right)}\mu^L(N_{i}(\mathbf{x}),a_{\underline{r}(x_{(i-1)})})+
$$
$$
+\left[\frac{\left(a_{\underline{r}(x_{(i-1)})+1}+x_{(i-1)}-2a_{\underline{r}(x_{(i-1)})}\right)\left(a_{\underline{r}(x_{(i-1)})+1}-x_{(i-1)}\right)}{2\left(a_{\underline{r}(x_{(i-1)})+1}-a_{\underline{r}(x_{(i-1)})}\right)}+\frac{a_{\underline{r}(x_{(i-1)})+2}-a_{\underline{r}(x_{(i-1)})+1}}{2}\right]\mu^L(N_{i}(\mathbf{x}),a_{\underline{r}(x_{(i-1)})+1})+
$$
$$
+\sum_{r=\underline{r}(x_{(i-1)})+2}^{\underline{r}(x_{(i)})-1}\frac{\left(a_{r+1}-a_{r-1}\right)}{2}\mu^L(N_{i}(\mathbf{x}),a_r)+
$$
$$
+\left[\frac{a_{\underline{r}(x_{(i)})}-a_{\underline{r}(x_{(i)})-1}}{2}+\frac{\left(2a_{\underline{r}(x_{(i)})+1}-a_{\underline{r}(x_{(i)})}-x_{(i)}\right)\left(x_{(i)}-a_{\underline{r}(x_{(i)})}\right)}{2\left(a_{\underline{r}(x_{(i)})+1}-a_{\underline{r}(x_{(i)})}\right)}\right]+\mu^L(N_{i}(\mathbf{x}),a_{\underline{r}(x_{(i)})})+$$
\begin{equation}\label{ILDChLR_neq_}
+\frac{\left(x_{(i)}-a_{\underline{r}(x_{(i)})}\right)^2}{2\left(a_{\underline{r}(x_{(i)})+1}-a_{\underline{r}(x_{(i)})}\right)}\mu^L(N_{i}(\mathbf{x}),a_{\underline{r}(x_{(i)})+1})
\end{equation}	
\end{footnotesize}
\end{itemize}
 
With respect to preference information in terms of importance and interaction between criteria, we can observe that on the basis of (\ref{continuous_capacities}) for all $g_i \in G,$ and for all $t \in [\alpha,\beta]$, the importance index $\phi(i,\mu^L,t)$ and the interaction index $I(E,\mu^L,t)$ can be expressed as linear functions of the same capacities and, consequently, of the related M\"{o}bius transforms. Since, by (\ref{continuous_capacities}), $\mu^L(E,t)$ can be expressed as linear interpolation of $\mu^L(E,a_{\underline{r}(t)})$ and $\mu^L(E,a_{\underline{r}(t)+1})$, $\phi(i,\mu^L,t)$ can be expressed as linear interpolation of $\phi(i,\mu^L,a_{\underline{r}(t)})$ and $\phi(i,\mu^L,a_{\underline{r}(t)+1})$ and $I(E,\mu^L,t)$ can be expressed as linear interpolation of $I(E,\mu^L,a_{\underline{r}(t)})$ and $I(E,\mu^L,a_{\underline{r}(t)+1})$. Indeed,

		$$\phi(i,\mu^L,t)=\phi(i,\mu^L,a_{\underline{r}(t)})+\frac{t-a_{\underline{r}(t)}}{a_{\underline{r}(t)+1}-a_{\underline{r}(t)}}\left[\phi(i,\mu^L,a_{\underline{r}(t)+1}) - \phi(i,\mu^L,a_{\underline{r}(t)})\right]=$$
	\begin{equation*} 
	=\frac{a_{\underline{r}(t)+1}-t}{a_{\underline{r}(t)+1}-a_{\underline{r}(t)}}\phi(i,\mu^L,a_{\underline{r}(t)})+\frac{t-a_{\underline{r}(t)}}{a_{\underline{r}(t)+1}-a_{\underline{r}(t)}}\phi(i,\mu^L,a_{\underline{r}(t)+1})
	\end{equation*}
and 	
	$$I(E,\mu^L,t)=I(E,\mu^L,a_{\underline{r}(t)})+\frac{t-a_{\underline{r}(t)}}{a_{\underline{r}(t)+1}-a_{\underline{r}(t)}}\left[I(E,\mu^L,a_{\underline{r}(t)+1}) - I(E,\mu^L,a_{\underline{r}(t)})\right]=$$
	\begin{equation*} 
	=\frac{a_{\underline{r}(t)+1}-t}{a_{\underline{r}(t)+1}-a_{\underline{r}(t)}}I(E,\mu^L,a_{\underline{r}(t)})+\frac{t-a_{\underline{r}(t)}}{a_{\underline{r}(t)+1}-a_{\underline{r}(t)}}I(E,\mu^L,a_{\underline{r}(t)+1}).
	\end{equation*}
On the basis of this observation, also preference information in terms of importance and interaction of criteria can be represented as linear constraints on the unknowns $m^L(E,a_0),\ldots,m^L(E,a_p)$ for all $E \subseteq G$. For fixing the ideas, let us consider the preference information related to the greater importance of criterion $g_i$ over criterion $g_j$ in the interval of evaluations $[\gamma,\delta]\subseteq [\alpha,\beta]$, denoted by $g_i \succ^{DM}_{[\gamma,\delta]} g_j$. It can be interpreted as  
		\begin{equation}\label{comp_imp} 
	\phi(i,\mu^L,t) > \phi(j,\mu^L,t) \text{ for all } t \in [\gamma,\delta] 
	\end{equation}
that is equivalent to the following constraints
		\begin{equation}\label{comp_imp_LP} 
	\phi(i,\mu^L,t) > \phi(j,\mu^L,t) \text{ for } t=\gamma, a_{\underline{r}(\gamma)+1}, \ldots, a_{\underline{r}(\delta)}, \delta.
	\end{equation}
Indeed, condition (\ref{comp_imp_LP}) is clearly necessary for (\ref{comp_imp}), because it requires that the inequality $\phi(i,\mu^L,t) > \phi(j,\mu^L,t)$ holds for a subset of points in $[\gamma,\delta]$. (\ref{comp_imp_LP}) is also sufficient because independently on the interval among $[\gamma, a_{\underline{r}(\gamma)+1}], [a_{\underline{r}(\gamma)+1},a_{\underline{r}(\gamma)+2}], \ldots, [a_{\underline{r}(\delta)-1},a_{\underline{r}(\delta)}]$ and $[a_{\underline{r}(\delta)}, \delta]$ to which $t$ belongs, $\phi(i,\mu^L,t)$ and $\phi(j,\mu^L,t)$ are convex combinations with the same multipliers of the extremes of the interval. Consequently, if $\phi(i,\mu^L,t)$ is greater than $\phi(j,\mu^L,t)$ in the two extremes of the interval, then $\phi(i,\mu^L,t)$ is greater than $\phi(j,\mu^L,t)$ for all the $t$ in the same interval.

Reasoning in the same way, we can say that the preference information related to the positive (negative) interaction between criteria $g_i$ and $g_j$ in the interval of evaluations $[\gamma,\delta]\subseteq [\alpha,\beta]$ can be interpreted as  
		\begin{equation*}
	I(\mu^L,\{i,j\},t) > [<] 0 \text{ for all } t \in [\gamma,\delta] 
	\end{equation*}
that is equivalent to the  constraint
		\begin{equation}\label{Eq27}
	I(\mu^L,\{i,j\},t) > [<] 0 \text{ for all}\;\; t=\gamma, a_{\underline{r}(\gamma)+1}, \ldots, a_{\underline{r}(\delta)}, \delta.
	\end{equation} 

Let us observe that using (\ref{FirstTrapezoid}) and (\ref{ILDChLR_neq_}), the level dependent Choquet integral  ${Ch}^{L}(\mathbf{x},\mu^{L})$ can be expressed as a linear function of values $\mu^L(E,a_0),\ldots,\mu^L(E,a_p)$ for all $E \subseteq G$ that, in turn, can be expressed in terms of the corresponding M\"{o}bius transforms $m^L(E,a_0),\ldots,m^L(E,a_p)$. Therefore, preference information expressed in terms of pairwise preference comparisons ($a \succ^{DM} b$ or $a \sim^{DM} b$) can be translated into linear constraints in the unknowns $m^L(E,a_0),\ldots,m^L(E,a_p)$ for all $E \subseteq G$. Consequently, results of ROR and SMAA can be obtained by solving LP problems and applying HAR algorithm as explained in Section \ref{NARSMLevDep} for the case of interval level dependent capacities.

\subsection{Introductory example: continuous case}\label{inExample}
Let us continue the example introduced in Section \ref{ainex} with the aim of illustrating the use of piecewise linear level dependent capacities in the perspective of tempered parsimony for MCDA models we are promoting. For this reason, also in this case, the example is presented quite analytically. The dean maintains the convictions already expressed on the importance and interaction  between Mathematics and Physics. However, he thinks to present the evaluations of students to the school committee using a formal model that avoids jumps of importance and interaction between criteria in the interval of feasible notes. Consequently:  
\begin{itemize}
	\item for notes smaller than 25, Physics is more important than Mathematics and there is a synergy between the two subjects, 
	\item for notes greater than 25, Mathematics is more important than Physics and there is a redundancy between the two subjects.
\end{itemize}
With this aim, the dean considered the continuous level dependent capacity presented in the previous Section \ref{Continuity}. More in detail, the interval $[18,30]$, where the admissible values for the notes lay, was split in the two subintervals $[18,25[$ and $]25,18]$ plus the singleton $\{25\}$, so that the whole level dependent capacity $\mu^L(E,t): 2^{\{Mathematics,Physics\}}\times [18,30]\rightarrow[0,1]$ remains fixed once $\mu^L(E,18), \mu^L(E,25)$ and $\mu^L(E,30)$ are determined, with $E \in 2^{\{Mathematics,Physics\}}$. Let us remember that, in the following equations, we shall write, $M$ and $Ph$ instead of $Mathematics$ and $Physics$. Moreover, to simplify the notation, we shall write $\mu^{L}(M,t)$ and $\mu^{L}(Ph,t)$ instead of $\mu^{L}(\{M\},t)$ and $\mu^{L}(\{Ph\},t)$.\\
Let us observe that:
\begin{itemize}
	\item for notes in the interval $[18,25[$ Mathematics is more important than Physics, that is, $\mu^L(M,t)>\mu^L(Ph,t)$ for all $t \in [18,25[$,
	\item for notes in the interval $]25,30[$ Physics is more important than Mathematics, that is, $\mu^L(Ph,t)>\mu^L(M,t)$ for all $t \in ]25,30]$,
	\item $\mu^L(M,t)$ and $\mu^L(Ph,t)$ have to be continuous on the whole interval $[18,30]$.
\end{itemize}
\noindent Taking into account the previous information, one has to conclude that 
\begin{equation}\label{MEP}
\mu^L(M,25)=\mu^L(Ph,25).
\end{equation} 
Analogously, let us observe that:
\begin{itemize}
	\item for notes in the interval $[18,25[$ there is a redundancy between Mathematics and Physics, that is, 
	$$
	\mu^L\left(\{M,Ph\},t\right)<\mu^L(M,t)+\mu^L(Ph,t), \;\;\mbox{for all}\;t \in [18,25[,
	$$
	\item for notes in the interval $]25,30[$ there is a synergy between notes on Mathematics and Physics, that is, 
	$$
	\mu^L\left(\{M,Ph\},t\right)>\mu^L(M,t)+\mu^L(Ph,t), \;\;\mbox{for all}\;t \in ]25,30],
	$$
	\item $\mu^L\left(\{M,Ph\},t\right)$ has to be continuous on the whole interval $[18,30]$.
\end{itemize}
\noindent On the basis of the previous considerations, one has to conclude that
\begin{equation*}
\mu^L\left(\{M,Ph\},25\right)=\mu^L(M,25)+\mu^L(Ph,25)
\end{equation*}
\noindent so that, observing that $\mu^L(\{M,Ph\},t)=1$ for all $t \in [18,30]$, one has
\begin{equation}\label{MPSum}
\mu^L(M,25)+\mu^L(Ph,25)=1.
\end{equation}
From (\ref{MEP}) and (\ref{MPSum}), we therefore get
\begin{equation}\label{MP25}
\mu^L(M,25)=\mu^L(Ph,25)=0.5.
\end{equation}
Using the continuous level dependent capacity $\mu^L$, according to (\ref{ILDChLR_neq_}), the overall evaluations of students from $A$ to $I$ can be expressed as follows:

\begin{small}
\begin{itemize}
\item ${Ch}^{L}(A,\mu^{L})= 28;$
\item ${Ch}^{L}(B,\mu^{L})= 26 + \mu^{L}(M,25)\frac{(2\times 30-30-26)(30-26)}{2(30-25)} + \mu^{L}(M,30)\frac{(30+26-2\times 25)(30-26)}{2(30-25)} =$
 $$=26 + 1.6\mu^{L}(M,25) + 2.4\mu^{L}(M,30)$$
so that, reminding (\ref{MP25}),
$${Ch}^{L}(B,\mu^{L})=26+1.6 \cdot 0.5 + 2.4 \mu^{L}(M,30)= 26.8 + 2.4 \mu^{L}(M,30);$$ 
\item ${Ch}^{L}(C,\mu^{L})= 26 + \mu^{L}(Ph,25)\frac{(2\times 30-30-26)(30-26)}{2(30-25)} + \mu^{L}(Ph,30)\frac{(30+26-2\times 25)(30-26)}{2(30-25)} =$
$$=26 + 1.6\mu^{L}(Ph,25) + 2.4\mu^{L}(Ph,30)$$
so that, reminding (\ref{MP25}),
$${Ch}^{L}(C,\mu^{L})=26 + 1.6 \cdot 0.5 + 2.4\mu^{L}(Ph,30)=26.8 + 2.4\mu^{L}(Ph,30);$$ 
\item ${Ch}^{L}(D,\mu^{L})= 23;$
\item ${Ch}^{L}(E,\mu^{L})= 21 + \mu^{L}(M,18)\frac{(2\times 25-25-21)(25-21)}{2(25-18)} + \mu^{L}(M,25)\frac{(25 +21 -2\times 18)(25-21)}{2(25-18)} =$
$$ 21 + \frac{8}{7}\mu^{L}(M,18) + \frac{20}{7}\mu^{L}(M,25)$$
so that, reminding (\ref{MP25}),
$${Ch}^{L}(E,\mu^{L})=21+\frac{8}{7}\mu^{L}(M,18)+\frac{20}{7}\cdot 0.5=21+\frac{10}{7}+\frac{8}{7}\mu^{L}(M,18);$$
\item ${Ch}^{L}(F,\mu^{L})= 21 + \mu^{L}(Ph,18)\frac{(2\times 25-25-21)(25-21)}{2(25-18)} + \mu^{L}(Ph,25)\frac{(25 +21 -2\times 18)(25-21)}{2(25-18)} =$
$$ =21 + \frac{8}{7}\mu^{L}(Ph,18) + \frac{20}{7}\mu^{L}(Ph,25)$$
so that, reminding (\ref{MP25}),
$${Ch}^{L}(F,\mu^{L})=21 + \frac{8}{7}\mu^{L}(Ph,18) + \frac{20}{7}\cdot 0.5=21 + \frac{10}{7}+\frac{8}{7}\mu^{L}(Ph,18);$$
\item ${Ch}^{L}(G,\mu^{L})= 26 + \mu^{L}(Ph,25)\frac{(2\times 30-29-26)(29-26)}{2(30-25)} + \mu^{L}(Ph,30)\frac{(29+26- 2\times 25)(29-26)}{2(30-25)} =$
$$=26 + 1.5\mu^{L}(Ph,25) + 1.5\mu^{L}(Ph,30)$$
so that, reminding (\ref{MP25}),
$${Ch}^{L}(G,\mu^{L})=26 + 1.5\cdot 0.5 + 1.5\mu^{L}(Ph,30)= 26,75+ 1.5\mu^{L}(Ph,30);$$
\item ${Ch}^{L}(H,\mu^{L})= 26 + \mu^{L}(M,25)\frac{(2\times 30-29-26)(29-26)}{2(30-25)} + \mu^{L}(M,30)\frac{(29+26-2\times 25)(29-26)}{2(30-25)} =$
$$=26 + 1.5\mu^{L}(M,25) + 1.5\mu^{L}(M,30)$$
so that, reminding (\ref{MP25}),
$${Ch}^{L}(H,\mu^{L})=26 + 1.5\cdot 0.5 + 1.5\mu^{L}(M,30)=26.75 + 1.5\mu^{L}(M,30);$$
\item ${Ch}^{L}(I,\mu^{L})= 27 + \mu^{L}(M,25)\frac{(2\times 30-30-27)(30-27)}{2(30-25)} + \mu^{L}(M,30)\frac{(30+27-2\times 25)(30-27)}{2(30-25)} =$
$$=27 + 0.9\mu^{L}(M,25) + 2.1\mu^{L}(M,30)$$
so that, reminding (\ref{MP25}),
$${Ch}^{L}(I,\mu^{L})=27 + 0.9\cdot 0.5 + 2.1\mu^{L}(M,30)=27.45 + 2.1\mu^{L}(M,30).$$
\end{itemize}
\end{small}
\noindent In consequence, the preferences expressed by the dean are translated into the following constraints in terms of continuous level dependent capacity $\mu^L$:

\begin{equation*}
\left\{
\begin{array}{lllll}
A\succ C & \Rightarrow & {Ch}^{L}(A,\mu^{L}) > {Ch}^{L}(C,\mu^{L}) &\Rightarrow & 28 > 26.8 + 2.4\mu^{L}(Ph,30),\\
C\succ B & \Rightarrow & {Ch}^{L}(C,\mu^{L}) > {Ch}^{L}(B,\mu^{L}) &\Rightarrow & 26.8 + 2.4\mu^{L}(Ph,30) > 26.8 + 2.4\mu^{L}(M,30),\\
E\succ F & \Rightarrow & {Ch}^{L}(E,\mu^{L}) > {Ch}^{L}(F,\mu^{L}) &\Rightarrow & 21+\frac{10}{7}+\frac{8}{7}\mu^{L}(M,18) > 21+\frac{10}{7}+\frac{8}{7}\mu^{L}(Ph,18),\\
F\succ D & \Rightarrow & {Ch}^{L}(F,\mu^{L}) > {Ch}^{L}(D,\mu^{L}) &\Rightarrow & 21+\frac{10}{7}+\frac{8}{7}\mu^{L}(Ph,18) > 23.\\
\end{array}
\right.
\end{equation*}

that is,
\begin{equation}\label{int_good}
A\succ C \Rightarrow 0.5 > \mu^{L}(Ph,30),
\end{equation}
\begin{equation}\label{PhM_good}
C\succ B \Rightarrow \mu^{L}(Ph,30) > \mu^{L}(M,30),
\end{equation}
\begin{equation}\label{PhM_bad}
E\succ F \Rightarrow \mu^{L}(M,18)  > \mu^{L}(Ph,18),
\end{equation}
\begin{equation}\label{int_bad}
F\succ D \Rightarrow \mu^{L}(Ph,18) > 0.5.
\end{equation}

Observe that these inequalities obtained from the preferences expressed by the dean are in agreement with his considerations about the importance and the interaction of criteria. Indeed, from (\ref{int_good})-(\ref{int_bad}) we get what follows:
\begin{itemize}
	\item by (\ref{continuous_capacities}) and reminding (\ref{MP25}), for all $t \in ]25,30]$ we obtain
	\begin{equation}\label{Ph_good}
	\mu^L(Ph,t)=\frac{30-t}{30-25}\mu^L(Ph,25)+\frac{t-25}{30-25}\mu^L(Ph,30)=\frac{30-t}{5} \cdot 0.5+\frac{t-25}{5}\mu^L(Ph,30)
\end{equation}
and 
\begin{equation}\label{M_good}
	\mu^L(M,t)=\frac{30-t}{30-25}\mu^L(M,25)+\frac{t-25}{30-25}\mu^L(M,30)=\frac{30-t}{5} \cdot 0.5+\frac{t-25}{5}\mu^L(M,30)
\end{equation}
so that, by (\ref{PhM_good}), we get
\begin{equation}\label{Ph_good_}
	\mu^L(Ph,t)>\mu^L(M,t)\;\; \mbox{for all}\;\; t \in ]25,30],
\end{equation}
confirming the greater importance of Physics over Mathematics for notes greater than 25; 
\item putting together (\ref{int_good}) and (\ref{Ph_good}), we obtain
\begin{equation*}
0.5 > \mu^{L}(Ph,t) \mbox{ for all } t \in ]25,30],
\end{equation*}
so that, considering also (\ref{Ph_good_}) and reminding that $\mu^L(\{M,Ph\},t)=1$ for all $t$, we get 
\begin{equation*}
\mu^{L}(Ph,t) + \mu^{L}(M,t) <\mu^L(\{M,Ph\},t) \mbox{ for all } t \in ]25,30],
\end{equation*}
confirming the synergy between Mathematics and Physics for notes greater than 25; 
\item by (\ref{continuous_capacities}) and (\ref{MP25}), for all $t \in [18,25[$ we obtain
	\begin{equation}\label{Ph_bad__}
	\mu^L(Ph,t)=\frac{25-t}{25-18}\mu^L(Ph,18)+\frac{t-18}{25-18}\mu^L(Ph,25)=\frac{25-t}{7}\mu^L(Ph,18) +\frac{t-18}{7} \cdot 0.5
\end{equation}
and 
\begin{equation*}
	\mu^L(M,t)=\frac{25-t}{25-18}\mu^L(M,18)+\frac{t-18}{25-18}\mu^L(M,25)=\frac{25-t}{7}\mu^L(M,18) +\frac{t-18}{7}\cdot 0.5
\end{equation*}
so that, by (\ref{PhM_bad}), we get
\begin{equation}\label{Ph_bad_}
	\mu^L(M,t)>\mu^L(Ph,t) \mbox{ for all } t \in [18,25[,
\end{equation}
confirming the greater importance of Mathematics  over Physics for notes smaller than 25; 
\item putting together (\ref{int_bad}) and (\ref{Ph_bad__}), we obtain
\begin{equation*}
 \mu^{L}(Ph,t) > 0.5 \mbox{ for all } t \in ]25,30],
\end{equation*}
so that, considering also (\ref{Ph_bad_}) and reminding that $\mu^L(\{M,Ph\},t)=1$ for all admissible $t$, we get 
\begin{equation*}
\mu^{L}(Ph,t) + \mu^{L}(M,t) >\mu^L(\{M,Ph\},t) \mbox{ for all } t \in [18,25[,
\end{equation*}
confirming the redundancy between Mathematics and Physics for notes smaller than 25. 
\end{itemize}
Since the preferences expressed by the dean on the students are in agreement with the importance and interaction between criteria we wanted to take into consideration, the compatible level dependent capacities $\mu^L$ are those satisfying conditions (\ref{MP25}) and (\ref{int_good})-(\ref{int_bad}).
    
Now let us compare between them students $G, H$ and $I$ through the  Choquet integral based on the piecewise linear compatible capacities $\mu^L$ compatible with the preferences expressed by the dean. Let us observe that the three students have notes greater than 25 both on Mathematics and Physics, so that the different evaluations provided by the Choquet integral depend only on the values $\mu^L(E,t), E\subseteq \{M,Ph\}$ and $t\in ]25,30]$. Regarding the piecewise linear compatible capacities, they are determined only by the values of $\mu^L(M,30)$ and $\mu^L(Ph,30)$. Indeed, $\mu^L(\emptyset,t)=0$ and $\mu^L(\{M,Ph\},t)=1$ for all admissible value of $t$, while according to (\ref{Ph_good}) and (\ref{M_good}), for all $t \in ]25,30]$, the values of $\mu^L(M,t)$  and $\mu^L(Ph,t)$ depend only on $\mu^L(M,30)$ and $\mu^L(Ph,30)$. Therefore, with respect to students $G, H$ and $I$, taking into account preferences expressed by the dean and translated into the constraints (\ref{PhM_good}) and (\ref{M_good}), the set of all compatible piecewise linear capacities are represented by the points inside the triangle which vertices are $O=(0,0), P_1=(0,0.5)$ and $P_3=(0.5,0.5)$ in the $\mu^L(M,t) - \mu^L(Ph,t)$ plane in Figure \ref{ProbabilityGI}. 

On the basis of the above remarks, comparing students $G, H$ and $I$, we have that:
\begin{itemize}
	\item as in the case of interval of level dependent capacity (and, in fact, as in the case of any multiple criteria aggregation procedure) the dominance of $I$ over $H$ ensures that $I$ is preferred to $H$ for any compatible piecewise linear dependent capacity, that is, there is a necessary preference of $I$ over $H$;
	\item since $G \succ H$ is equivalent to 
$${Ch}^{L}(G,\mu^{L})>{Ch}^{L}(H,\mu^{L}),$$
that is,
$$26.75 + 1.5\mu^{L}(Ph,30)>26.75 + 1.5\mu^{L}(M,30),$$
by (\ref{PhM_good}), we can conclude that $G$ is preferred to $H$ for all compatible piecewise linear dependent capacities, so that there is a necessary preference of $G$ over $H$; 
\item since $G \succ I$ and $I \succ G$ are equivalent to 
$${Ch}^{L}(G,\mu^{L})>{Ch}^{L}(I,\mu^{L}) \mbox{ and } {Ch}^{L}(I,\mu^{L})>{Ch}^{L}(G,\mu^{L}),$$
that is,
$$26.75 + 1.5\mu^{L}(Ph,30)>27.45 + 2.1\mu^{L}(M,30) \mbox{ and }  27.45 + 2.1\mu^{L}(M,30)>26.75 + 1.5\mu^{L}(Ph,30),$$
there are compatible piecewise linear level dependent capacities $\mu^L$ for which $G$ is preferred to $I$ (for example, considering $\mu^L$ such that $\mu^L(M,30)=\frac{1}{126}$ and $\mu^L(Ph,30)=\frac{22}{45}$ representing point $B_{G \succsim I}$ in Figure \ref{ProbabilityGI} for which ${Ch}^{L}(G,\mu^{L})=27.4833$ and ${Ch}^{L}(I,\mu^{L})=27.4667$) and compatible piecewise linear level dependent capacities $\mu^L$ for which $I$ is preferred to $G$ (for example, considering $\mu^L$ such that $\mu^L(M,30)=\frac{11}{84}$ and $\mu^L(Ph,30)=\frac{11}{30}$ representing point $B_{I \succsim G}$ in Figure \ref{ProbabilityGI} for which ${Ch}^{L}(G,\mu^{L})=26.9464$ and ${Ch}^{L}(I,\mu^{L})=28.22$). Therefore, there is a a possible preference of $G$ over $I$ as well as a possible preference of $I$ over $G$ and there is no necessary preference between these two students. 		
\end{itemize}
More precisely, with respect to the comparison of students $G$ and $I$, we have that: 
\begin{itemize}
	\item $G$ is preferred over $I$ for the piecewise linear level dependent capacities $\mu^L$ represented by the points in the triangle of vertices $P_1\equiv\left(0,\frac{1}{2}\right), P_2\equiv(\frac{1}{42},\frac{1}{2})$ and $P_5\equiv(0,\frac{7}{15})$ (the above mentioned point $B_{G \succsim I}$ is the barycentre of this triangle),
	\item $I$ is preferred over $G$ for the piecewise linear level dependent capacities $\mu^L$ represented by the points in the polygon of vertices $O\equiv(0,0), P_3\equiv\left(\frac{1}{2},\frac{1}{2}\right), P_2\equiv\left(\frac{1}{42},\frac{1}{2}\right)$ and $P_5\equiv\left(0,\frac{7}{15}\right)$ (the above mentioned point $B_{I \succsim G}$ is the barycentre of this polygon).
\end{itemize}
Consequently the probability that taking randomly a compatible piecewise linear level dependent capacity $\mu^L$, $G$ is preferred over $I$ is 0.3175\%, corresponding to the ratio between the area of the triangle of vertices $P_1, P_2$ and $P_5$, and the triangle of vertices $O, P_1, P_3$, while the probability that $I$ is preferred over $G$ is 99.6825\%, corresponding to the the ratio between the area of the polygon of vertices $O, P_3, P_2$ and $P_5$, and the triangle of vertices $O$, $P_1$ and $P_3$.  Considering the probability of being the first, the second or the third taking randomly a compatible piecewise linear level dependent capacity $\mu^L$, we have that: 
\begin{itemize}
	\item as $G$ and $I$ are preferred to $H$ for all  compatible $\mu^L$, there is a probability of 100\% that $H$ is the third,
	\item as $I$ is the first when he is preferred to $G$ and the second otherwise, there is a probability of 0.3175\% that $H$ is the first and a probability of 99.6825\% that he is the second,
	\item finally, $G$ is the first with a probability of 99.6825\% and the second with a probability of 0.3175\%.
\end{itemize}
%
%
%
%

\begin{figure}[!h]
\begin{center}
\caption{The probability that $G$ is preferred to $I$ is equal to the ratio between the area of the triangle $P_{1}P_{2}P_5$ and the area of the triangle $OP_1P_3$. Analogously, the probability that $I$ is preferred to $G$ is equal to the ratio between the area of the polygon $OP_3P_2P_5$ and the are of the triangle $OP_1P_3$. \label{ProbabilityGI}}
\includegraphics[scale=0.3]{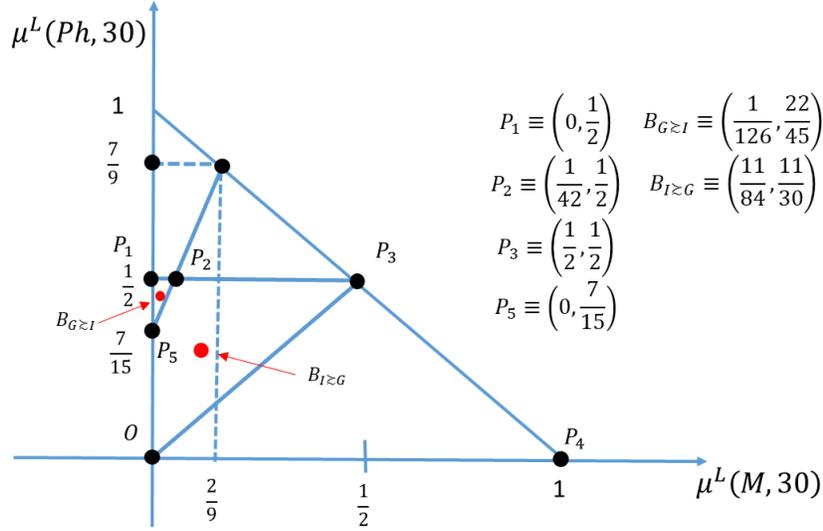}
\end{center}
\end{figure}

Let us observe that the considerations on the participation of the DM to the fixing of the reference points that we did at the end of Section \ref{ainex} for the interval level dependent capacities hold also for the piecewise linear level dependent capacities. Let us also point out that, in agreement with what observed at the end of the previous subsection, we have handled the application of the Choquet integral with respect to piecewise linear level dependent capacity using linear constraints only. This confirms that a systematic application of ROR and SMAA to this model can be dealt by solving LP problems and using the HAR algorithm.

\section{Illustrative example}\label{Example}
In this section, we shall apply our methodology to a real world problem attracting the interest of experts and public opinion in the last years, that is the university rankings. As acknowledged in \cite{CorrenteGrecoSlowinski2019}, nowadays there are plenty of university rankings, starting from the most known one, that is the Shanghai ranking (ARWU) \cite{ShanghaiData}, to the ranking provided by The Times \cite{THE}. All of them are based on different aggregation methods and different importance assigned to the criteria taken into account in the considered analysis. \\
In this case, we will concentrate our attention on the comparison of economics universities all over the world evaluated with respect to the teaching and learning (TL) aspect of their bachelor and master courses. The evaluations of the universities are measured by means of 4 subcriteria being Bachelor Graduation Rate (BGR), Masters Graduation Rate (MGR), Bachelors Graduation on Time (BGT) and Masters Graduation on Time (MGT). The description of the four criteria is given in Table \ref{CriteriaTable}. 

\begin{table}[!h]
  \centering
  \caption{Description of the criteria\label{CriteriaTable}}
	\resizebox{1\textwidth}{!}{
    \begin{tabular}{ll}
		\textbf{Criterion} & \textbf{Description} \\
		\hline 
		\hline
     Bachelor Graduation Rate (BGR) & The percentage of new entrants that successfully completed their bachelor programme. \\
		 Masters Graduation Rate (MGR) &  The percentage of new entrants that successfully completed their master programme. \\
		 Bachelors Graduation on Time (BGT) & The percentage of graduates that graduated within the time expected (normative time) \\
		& for their bachelor programme. \\
		 Masters Graduation on Time (MGT) & The percentage of graduates that graduated within the time expected (normative time) \\
		& for their masters programme.\\
		\hline
		\hline
	    \end{tabular}%
			}
\end{table}%

The database used in this study is U-Multirank \cite{multirank} where the evaluations can vary between 1 (Weak) to 5 (Very Good). For the sake of simplicity we are supposing that the ordinal evaluations from 1 to 5 are accepted by the DM as the corresponding utility value (for two procedures to assess utility values see \cite{angilella2015,BotteroEtAl2018}). At the beginning, 1609 universities are considered. Removing those having unavailable data for at least one of the four criteria and those for which the considered aspect was not applicable, 501 universities were left. Finally, we considered only universities that are not presenting the same profile. In this way, the considered universities are 124. The methodology has been applied to the whole set of universities and the final results can be consulted clicking on the following link: \href{http://www.antoniocorrente.it/wwwsn/images/allegati_articoli/ChLevelDependentNARORSMAA.xlsx}{supplementary results}. Anyway, for space reasons we show here the results of the application performed on a subset of universities that have been selected as explained in the following. Since the most interesting comparison regards the one performed between non-dominated universities, following the same procedure used in the celebrated evolutionary algorithm NSGA-II \cite{debieee}, we ordered the universities in non-dominated fronts. This means that we put in the first front the non-dominated universities. After removing them, we put in the second front the non-dominated universities among the remaining ones, and so on. Finally, 13 different fronts have been built. To show the application of the method, we have chosen the biggest one, that is the 7th, composed of 19 different universities which performances on the four criteria are shown in Table \ref{Sample}. Let us observe that another front or other universities (for example those belonging to the same Country) could be chosen. Close to these performances, we report also the performances of few universities the DM knows well and on which he provided some preference information. For brevity, the universities have been labeled by $U_{h}$. Anyway, in the file reporting the results of the application of the proposed method to the whole set of universities, we denoted the universities with their name. 

\begin{table}[!h]
\begin{center}
\caption{Performances of universities on the Teaching and Learning aspect}\label{SMAAResults}
\subtable[Sample of the universities\label{Sample}]{%
\resizebox{0.3\textwidth}{!}{
 \begin{tabular}{l|cccc}    
 & $BGR$ & $MGR$ & $BGT$ & $MGT$ \\
\hline
\hline
$U_{1}$  &  3     & 2     & 5     & 4 \\        
$U_{2}$  &  3     & 3     & 4     & 4 \\
$U_{3}$  &  2     & 4     & 4     & 4 \\
$U_{4}$  &  4     & 4     & 2     & 4 \\
$U_{5}$  &  4     & 4     & 4     & 2 \\    
$U_{6}$  &  4     & 4     & 3     & 3 \\    
$U_{7}$  &  5     & 2     & 4     & 3 \\    
$U_{8}$  &  3     & 3     & 5     & 3 \\    
$U_{9}$  &  3     & 4     & 4     & 3 \\    
$U_{10}$ &  5     & 3     & 3     & 2 \\    
$U_{11}$ &  2     & 2     & 5     & 5 \\    
$U_{12}$ &  2     & 3     & 5     & 4 \\    
$U_{13}$ &  2     & 3     & 4     & 5 \\    
$U_{14}$ &  4     & 2     & 4     & 4 \\    
$U_{15}$ &  4     & 3     & 4     & 3 \\    
$U_{16}$ &  3     & 2     & 4     & 5 \\    
$U_{17}$ &  3     & 3     & 2     & 5 \\    
$U_{18}$ &  3     & 4     & 3     & 4 \\    
$U_{19}$ &  5     & 4     & 2     & 3 \\
    \end{tabular}%
}
}
\subtable[Universities on which the DM provided his preferences\label{PREFUniversities}]{%
\resizebox{0.3\textwidth}{!}{
\begin{tabular}{l|cccc}    
 & $BGR$ & $MGR$ & $BGT$ & $MGT$ \\
\hline
\hline
$U_{20}$  &  5     & 5     & 5     & 4 \\
$U_{21}$  &  3     & 2     & 2     & 3 \\
$U_{22}$  &  5     & 4     & 4     & 5 \\
$U_{23}$  &  3     & 3     & 3     & 2 \\    
$U_{24}$  &  5     & 4     & 5     & 4 \\    
$U_{25}$  &  4     & 5     & 5     & 4 \\    
$U_{26}$  &  2     & 3     & 3     & 3 \\    
$U_{27}$  &  2     & 3     & 3     & 2 \\    
$U_{28}$  &  3     & 2     & 3     & 2 \\    
$U_{29}$  &  4     & 5     & 5     & 5 \\
\end{tabular}%
}
}
\end{center}
\end{table}

At first, the DM provided some preferences between criteria as well as some possible positive and negative interactions between pairs of criteria. In particular, he considered as benchmark ``not so good universities", being those with performances varying between 1 and 3 and ``good universities" being those with performances between 4 and 5. In this way, the interval scale $[1,5]$ is split in two subintervals being $[1,3]$ and $]3,5]$. The DM expressed the willingness to focus the attention on the bachelor students in case of not so good evaluations on considered criteria, while he wants to give more importance to master students in evaluating universities presenting good performances. This perspective is represented in formal terms as follows:  

\begin{itemize}
\item in evaluating universities with not so good evaluations on considered criteria, $BGR$ is more important than $MGR$, while the vice versa is true in evaluating universities presenting good performances. This preference information is translated into the following constraints:
\begin{itemize}
\item $\phi\left(\mu^{L},\{BGR\},[1,3]\right)>\phi\left(\mu^{L},\{MGR\},[1,3]\right)$,
\item $\phi\left(\mu^{L},\{BGR\},]3,5]\right)<\phi\left(\mu^{L},\{MGR\},]3,5]\right)$;
\end{itemize}
\item in evaluating universities with not so good evaluations, $BGT$ is more important than $MGT$, while the viceversa is true in evaluating universities presenting good performances. This preference information is translated into the following constraints:
\begin{itemize}
\item $\phi\left(\mu^{L},\{BGT\},[1,3]\right)>\phi\left(\mu^{L},\{MGT\},[1,3]\right)$,
\item $\phi\left(\mu^{L},\{BGT\},]3,5]\right)<\phi\left(\mu^{L},\{MGT\},]3,5]\right)$;
\end{itemize}
\item $BGR$ and $BGT$ are positively interacting as well as $MGR$ and $MGT$ in evaluating universities with not so good performances on considered criteria. The same pairs of criteria are negatively interacting in evaluating universities with good performances. On one hand, the DM would like to give a bonus to not so good universities in which students (both bachelor and master) are able not only to complete their programmes but also to finish their studies in the expected time. On the other hand, since good universities are, in general, attended by good students, it is not so surprising that students would complete their programmes and would be also able to finish their studies in the expected time. For this reason, in this case, the considered pairs of criteria are negatively interacting. This piece of preference information is translated into the following constraints: 
\begin{itemize}
\item $I\left(\mu^{L},\{BGR,BGT\},[1,3]\right)>0$, 
\item $I\left(\mu^{L},\{BGR,BGT\},]3,5]\right)<0$,
\item $I\left(\mu^{L},\{MGR,MGT\},[1,3]\right)>0$,
\item $I\left(\mu^{L},\{MGR,MGT\},]3,5]\right)<0$.  
\end{itemize}
\end{itemize}

\noindent The DM provided also the following preference information on the universities which performances are shown in Table \ref{PREFUniversities}:

$$
\begin{array}{llll}
P1) & U_{28}\succ U_{21} & \mbox{and} & U_{22}\succ U_{24}, \\
P2) & U_{28}\succ U_{27} & \mbox{and} & U_{25}\succ U_{24}, \\
P3) & U_{23}\succ U_{26} & \mbox{and} & U_{29}\succ U_{20}. \\
\end{array}
$$

\noindent To represent this preference information and, in particular, the fact that the importance of criteria as well as the interactions between criteria are dependent on the considered performances we have to use the level dependent Choquet integral. As already explained above, we consider the partition $a_0=1,$ $a_2=3,$ $a_2=5$ of the interval $[1,5]$. This means that we are taking into account two capacities, one on the interval $[1,3]$ and one on the interval $]3,5]$. Considering this partition, we solve the LP problem (\ref{LPExistence}) presented in Section \ref{NAROR_des}, obtaining $\varepsilon^{*}=0.25>0$.  This means that the level dependent Choquet integral, with the considered partition, is able to represent the preferences provided by the DM. 

Since the universities at hand belong to the same non-dominated front, the dominance relation is empty and, consequently, looking only at their performances nothing can be said. Consequently, as following step, we decided to apply the NAROR to check if the necessary and possible preference relations can give a more clear view of how the universities can be compared each other. Unlikely, the necessary preference relation is empty (apart from the trivial relations $U_{h}\succsim U_{h}$ for all $U_{h}$). This means that there is not a certain recommendation on the problem at hand since there are models compatible with the DM's preferences for which each university can be preferred to any other and vice versa. For such a reason, as described in Section \ref{SMAA_des}, we apply the SMAA methodology obtaining probabilistic preferences. At first, we computed the rank acceptability indices of each university.

\begin{table}[!h]
\begin{center}
\caption{Rank Acceptability indices for the considered universities}\label{RAIResults}
\resizebox{1\textwidth}{!}{
 \begin{tabular}{l|ccccccccccccccccccc}
 & $b^{1}(\cdot)$  & $b^{2}(\cdot)$ & $b^{3}(\cdot)$ & $b^{4}(\cdot)$ & $b^{5}(\cdot)$ & $b^{6}(\cdot)$ & $b^{7}(\cdot)$ & $b^{8}(\cdot)$ & $b^{9}(\cdot)$ & $b^{10}(\cdot)$ & $b^{11}(\cdot)$ & $b^{12}(\cdot)$ & $b^{13}(\cdot)$ & $b^{14}(\cdot)$ & $b^{15}(\cdot)$ & $b^{16}(\cdot)$ & $b^{17}(\cdot)$ & $b^{18}(\cdot)$ & $b^{19}(\cdot)$ \\
\hline
\hline
 $U_{1}$ &   0.261 & 5.511 & 10.258 & 11.567 & 10.474 & 7.405 & 6.247 & 5.365 & 6.219 & 5.29  & 5.227 & 5.156 & 4.837 & 4.231 & 4.486 & 2.852 & 3.194 & 1.157 & 0.263 \\
 $U_{2}$ &   0     & 0.221 & 1.448 & 5.285 & 9.186 & 12.281 & 13.971 & 13.33 & 12.842 & 9.735 & 9.824 & 6.901 & 3.388 & 0.987 & 0.561 & 0.04  & 0     & 0     & 0 \\
 $U_{3}$ &   0.001 & 0.013 & 0.098 & 0.228 & 0.555 & 0.823 & 0.795 & 1.151 & 1.884 & 2.156 & 2.545 & 3.939 & 5.492 & 7.25  & 11.736 & 16.019 & 16.734 & 15.598 & 12.983 \\
 $U_{4}$ &   0.45  & 1.852 & 2.335 & 1.683 & 2.719 & 2.583 & 3.106 & 3.748 & 3.968 & 4.863 & 4.541 & 5.329 & 6.314 & 6.468 & 8.35  & 9.766 & 12.042 & 11.055 & 8.828 \\
 $U_{5}$ &   11.409 & 8.189 & 8.489 & 8.105 & 8.125 & 6.906 & 6.405 & 6.381 & 5.205 & 4.287 & 4.4   & 4.198 & 4.872 & 5.222 & 3.994 & 2.44  & 0.964 & 0.409 & 0 \\
 $U_{6}$ &   0.156 & 0.971 & 3.005 & 5.686 & 5.5   & 6.283 & 6.032 & 5.909 & 5.958 & 7.523 & 7.421 & 7.51  & 8.734 & 9.421 & 7.345 & 6.571 & 5.547 & 0.397 & 0.031 \\
 $U_{7}$ &   4.925 & 3.732 & 5.354 & 6.485 & 7.326 & 6.617 & 5.931 & 5.843 & 4.752 & 4.337 & 4.504 & 5.032 & 5.056 & 6.198 & 7.919 & 7.65  & 4.699 & 2.852 & 0.788 \\
 $U_{8}$ &   2.165 & 3.094 & 4.395 & 5.777 & 6.566 & 6.134 & 5.965 & 5.845 & 5.678 & 5.37  & 5.83  & 5.823 & 6.939 & 7.416 & 6.837 & 6.163 & 4.64  & 3.421 & 1.942 \\
 $U_{9}$ &   1.125 & 3.865 & 4.489 & 5.659 & 5.754 & 6.034 & 6.983 & 7.679 & 8.133 & 9.077 & 10.572 & 11.091 & 9.294 & 6.573 & 2.707 & 0.859 & 0.106 & 0     & 0 \\
 $U_{10}$ &   0.002 & 0.172 & 0.216 & 0.175 & 0.366 & 0.273 & 0.399 & 0.709 & 1.29  & 1.914 & 2.046 & 2.16  & 2.707 & 4.187 & 6.327 & 10.301 & 14.982 & 17.199 & 34.575 \\
 $U_{11}$ &   10.098 & 15.65 & 15.175 & 8.586 & 5.391 & 3.628 & 3.652 & 2.8   & 3.178 & 3.021 & 3.333 & 3.424 & 3     & 3.662 & 3.735 & 2.622 & 1.932 & 3.936 & 3.177 \\
 $U_{12}$ &   0     & 0     & 0.06  & 0.385 & 0.746 & 2.378 & 2.871 & 3.163 & 3.294 & 3.67  & 3.97  & 5.435 & 6.117 & 6.853 & 7.994 & 10.135 & 12.956 & 14.737 & 15.236 \\
 $U_{13}$ &   2.881 & 14.07 & 13.639 & 9.138 & 6.819 & 5.829 & 4.784 & 4.147 & 3.979 & 4.195 & 3.783 & 3.955 & 4.597 & 4.541 & 3.964 & 4.899 & 3.477 & 1.075 & 0.228 \\
 $U_{14}$ &   1.478 & 14.957 & 10.974 & 9.822 & 8.362 & 7.491 & 6.417 & 5.906 & 4.77  & 5.028 & 4.337 & 4.8   & 4.625 & 4.565 & 3.436 & 1.962 & 0.543 & 0.225 & 0.302 \\
 $U_{15}$ &   0.006 & 0.177 & 1.07  & 3.279 & 6.106 & 8.651 & 10.31 & 10.36 & 12.741 & 13.109 & 10.313 & 9.528 & 6.599 & 4.574 & 2.677 & 0.451 & 0.049 & 0     & 0 \\
 $U_{16}$ &   60.842 & 17.604 & 7.103 & 3.182 & 2.226 & 1.239 & 0.964 & 1.042 & 1.095 & 1.104 & 1.031 & 0.881 & 0.984 & 0.578 & 0.125 & 0     & 0     & 0     & 0 \\
 $U_{17}$ &   1.109 & 4.534 & 4.778 & 7.42  & 4.529 & 3.817 & 3.886 & 3.516 & 3.06  & 3.545 & 4.27  & 4.143 & 6.505 & 7.759 & 10.223 & 8.077 & 7.266 & 6.046 & 5.517 \\
 $U_{18}$ &   1.231 & 3.438 & 6.183 & 6.546 & 8.101 & 9.509 & 9.011 & 10.324 & 9.453 & 9.473 & 9.081 & 7.237 & 5.337 & 3.862 & 1.117 & 0.084 & 0.013 & 0     & 0 \\
 $U_{19}$ &   1.861 & 1.95  & 0.932 & 0.991 & 1.15  & 2.118 & 2.271 & 2.785 & 2.5   & 2.302 & 2.971 & 3.458 & 4.603 & 5.653 & 6.467 & 9.109 & 10.856 & 21.894 & 16.129 \\
\end{tabular}%
}
\end{center}
\end{table}

Looking at Table \ref{RAIResults}, one can see that all but two universities can reach the first position but only three of them have a first rank acceptability index greater than 10\%, that is, in the order of their rank acceptability index for the first position, $U_{16}$ ($b^{1}(U_{16})=60.842\%$), $U_{5}$ ($b^{1}(U_{5})=11.409\%$) and $U_{11}$ ($b^{1}(U_{11})=10.098\%$). Considering the tail of this ranking, only six universities cannot be in the last position. Moreover, the universities being most frequently the last are $U_{10}$ ($b^{19}(U_{10})=34.575\%$) followed by $U_{19}$ ($b^{19}(U_{19})=16.129\%)$, $U_{12}$ ($b^{19}(U_{12})=15.236\%$), and $U_{3}$ ($b^{19}(U_{3})=12.983\%$). All other universities have a frequency lower than $9\%$ of being the last. 

\begin{table}[!h]
\begin{center}
\caption{Pairwise winning indices for the considered universities}\label{PWIResults}
\resizebox{1\textwidth}{!}{
 \begin{tabular}{l|ccccccccccccccccccc}
 &  $U_{1}$ &  $U_{2}$ &  $U_{3}$ &  $U_{4}$ &  $U_{5}$ &  $U_{6}$ &  $U_{7}$ &  $U_{8}$ &  $U_{9}$ &  $U_{10}$ &  $U_{11}$ &  $U_{12}$ &  $U_{13}$ &  $U_{14}$ &  $U_{15}$ &  $U_{16}$ &  $U_{17}$ &  $U_{18}$ &  $U_{19}$ \\
\hline
\hline
    $U_{1}$  &0     & 52.88 & 87.76 & 73.891 & 47.394 & 63.221 & 61.815 & 67.549 & 58.72 & 89.058 & 40.502 & 94.58 & 45.355 & 41.761 & 60.788 & 2.67  & 65.033 & 52.922 & 78.926 \\
    $U_{2}$  &47.119 & 0     & 96.115 & 77.306 & 41.156 & 69.175 & 56.098 & 62.675 & 61.247 & 94.356 & 35.04 & 92.351 & 40.654 & 38.117 & 60.672 & 3.088 & 68.563 & 51.892 & 83.673 \\
    $U_{3}$  &12.24 & 3.885 & 0     & 35.834 & 7.502 & 19.641 & 17.501 & 18.88 & 7.237 & 65.397 & 11.531 & 40.937 & 6.422 & 8.678 & 8.695 & 1.807 & 26.983 & 5.179 & 46.993 \\
    $U_{4}$  &26.109 & 22.694 & 64.166 & 0     & 17.139 & 25.031 & 29.685 & 33.314 & 21.757 & 69.916 & 24.762 & 56.472 & 23.987 & 19.271 & 24.485 & 6.669 & 30.481 & 16.775 & 64.76 \\
    $U_{5}$  &52.606 & 58.844 & 92.498 & 82.861 & 0     & 75.625 & 63.038 & 64.312 & 69.535 & 99.71 & 43.33 & 84.363 & 47.921 & 48.795 & 68.406 & 17.957 & 69.914 & 59.988 & 89.043 \\
    $U_{6}$  &36.779 & 30.825 & 80.359 & 74.969 & 24.375 & 0     & 42.531 & 47.458 & 33.406 & 97.501 & 32.581 & 68.821 & 34.679 & 25.95 & 37.342 & 8.135 & 57.689 & 26.147 & 88.518 \\
    $U_{7}$  &38.185 & 43.902 & 82.499 & 70.315 & 36.962 & 57.469 & 0     & 54.937 & 47.14 & 95.878 & 33.907 & 74.925 & 39.099 & 26.674 & 48.793 & 11.583 & 57.873 & 42.855 & 82.077 \\
    $U_{8}$  &32.45 & 37.324 & 81.12 & 66.686 & 35.688 & 52.542 & 45.063 & 0     & 43.95 & 83.903 & 34.362 & 86.365 & 38.132 & 31.024 & 43.796 & 7.345 & 58.468 & 37.233 & 70.393 \\
    $U_{9}$  &41.279 & 38.752 & 92.763 & 78.243 & 30.465 & 66.594 & 52.86 & 56.049 & 0     & 98.008 & 35.124 & 80.173 & 37.753 & 35.376 & 51.255 & 10.723 & 63.42 & 41.508 & 87.061 \\
    $U_{10}$  &10.942 & 5.644 & 34.603 & 30.084 & 0.29  & 2.499 & 4.122 & 16.097 & 1.992 & 0     & 12.713 & 32.6  & 12.026 & 4.647 & 1.612 & 3.194 & 23.716 & 1.945 & 33.74 \\
    $U_{11}$  &59.498 & 64.96 & 88.469 & 75.238 & 56.67 & 67.419 & 66.093 & 65.638 & 64.876 & 87.287 & 0     & 95.279 & 54.991 & 55.243 & 65.079 & 12.737 & 71.528 & 62.203 & 79.975 \\
    $U_{12}$  &5.42  & 7.649 & 59.062 & 43.528 & 15.637 & 31.179 & 25.075 & 13.635 & 19.827 & 67.4  & 4.721 & 0     & 2.67  & 13.532 & 17.485 & 0.309 & 31.26 & 16.342 & 52.378 \\
    $U_{13}$  &54.645 & 59.346 & 93.578 & 76.013 & 52.079 & 65.321 & 60.901 & 61.868 & 62.247 & 87.974 & 45.009 & 97.33 & 0     & 50.715 & 61.209 & 5.42  & 73.457 & 58.617 & 79.244 \\
    $U_{14}$  &58.239 & 61.883 & 91.322 & 80.729 & 51.204 & 74.05 & 73.326 & 68.976 & 64.624 & 95.353 & 44.757 & 86.468 & 49.285 & 0     & 68.872 & 10.233 & 67.769 & 59.295 & 86.586 \\
    $U_{15}$  &39.212 & 39.328 & 91.305 & 75.515 & 31.594 & 62.658 & 51.206 & 56.204 & 48.745 & 98.388 & 34.921 & 82.515 & 38.791 & 31.128 & 0     & 7.6   & 63.462 & 38.995 & 82.695 \\
    $U_{16}$  &97.33 & 96.912 & 98.193 & 93.331 & 82.043 & 91.865 & 88.417 & 92.655 & 89.277 & 96.806 & 87.263 & 99.691 & 94.58 & 89.767 & 92.4  & 0     & 97.547 & 89.933 & 92.688 \\
    $U_{17}$  &34.967 & 31.437 & 73.017 & 69.519 & 30.086 & 42.311 & 42.127 & 41.532 & 36.58 & 76.284 & 28.472 & 68.74 & 26.543 & 32.231 & 36.538 & 2.453 & 0     & 30.261 & 69.217 \\
    $U_{18}$  &47.078 & 48.108 & 94.821 & 83.225 & 40.012 & 73.853 & 57.145 & 62.767 & 58.492 & 98.055 & 37.797 & 83.658 & 41.383 & 40.705 & 61.005 & 10.067 & 69.739 & 0     & 87.711 \\
    $U_{19}$  &21.074 & 16.327 & 53.007 & 35.24 & 10.957 & 11.482 & 17.923 & 29.607 & 12.939 & 66.26 & 20.025 & 47.622 & 20.756 & 13.414 & 17.305 & 7.312 & 30.783 & 12.289 & 0 \\
\end{tabular}%
}
\end{center}
\end{table}

Analogously, to pairwise compare the considered universities, we computed the pairwise winning indices shown in Table \ref{PWIResults}. Considering the three universities being most frequently in the first position, that are $U_{16}$, $U_{5}$ and $U_{11}$, one can observe that $U_{16}$ is preferred to $U_{5}$ and $U_{11}$ with frequencies of $82.043\%$ and $87.263\%$, respectively. Moreover, it is preferred to all other universities with a frequency at least equal to the 88.417\%. This enforces the conviction that $U_{16}$ is the best among the universities at hand.

On the basis of the rank acceptability indices, as done also in \cite{AngilellaEtAl2018}, for each university we compute the best and the worst possible positions (Table \ref{BW}) and the most frequent ones (Table \ref{MF}). In this way, we have a clearer view of the goodness of the considered universities. Almost all universities have as best and worst reached positions the first and the last one. Anyway, the data in Table \ref{MF} give an idea on the stability of the results of each university. Indeed, $U_{16}$, $U_{5}$ and $U_{11}$ have their highest rank acceptability indices values in correspondence of the first three ranking positions. Looking at the opposite side of the ranking, $U_{10}$ and $U_{19}$ are confirmed as the worst universities since they have the highest values for their rank acceptability indices in correspondence of the last three positions. To have a look at similar results on the whole set of universities, the interested reader is deferred to the supplementary material downloadable clicking on the following link: \href{http://http://www.antoniocorrente.it/wwwsn/images/allegati_articoli/ChLevelDependentNARORSMAA.xlsx}{supplementary results}.

\begin{table}[!h]
\begin{center}
\caption{Best, worst and most frequent positions}\label{BWMFResults}
\subtable[Best and worst positions\label{BW}]{%
\resizebox{0.3\textwidth}{!}{
 \begin{tabular}{l|cccc}    
 & Best & $b^{West}(\cdot)$ & Worst & $b^{Worst}(\cdot)$ \\
\hline
\hline
$U_{1}$ &1     & 0.261 & 19    & 0.263 \\
    $U_{2}$ &2     & 0.221 & 16    & 0.04 \\
    $U_{3}$ &1     & 0.001 & 19    & 12.983 \\
    $U_{4}$ &1     & 0.45  & 19    & 8.828 \\
    $U_{5}$ &1     & 11.409 & 18    & 0.409 \\
    $U_{6}$ &1     & 0.156 & 19    & 0.031 \\
    $U_{7}$ &1     & 4.925 & 19    & 0.788 \\
    $U_{8}$ &1     & 2.165 & 19    & 1.942 \\
    $U_{9}$ &1     & 1.125 & 17    & 0.106 \\
    $U_{10}$ &1     & 0.002 & 19    & 34.575 \\
    $U_{11}$ &1     & 10.098 & 19    & 3.177 \\
    $U_{12}$ &3     & 0.06  & 19    & 15.236 \\
    $U_{13}$ &1     & 2.881 & 19    & 0.228 \\
    $U_{14}$ &1     & 1.478 & 19    & 0.302 \\
    $U_{15}$ &1     & 0.006 & 17    & 0.049 \\
    $U_{16}$ &1     & 60.842 & 15    & 0.125 \\
    $U_{17}$ &1     & 1.109 & 19    & 5.517 \\
    $U_{18}$ &1     & 1.231 & 17    & 0.013 \\
    $U_{19}$ &1     & 1.861 & 19    & 16.129 \\
\end{tabular}%
}
}
\subtable[Most frequent positions\label{MF}]{%
\resizebox{0.4\textwidth}{!}{
\begin{tabular}{l|cccccc}    
 & $high_{1}$ & $b^{high_{1}}(\cdot)$ & $high_{2}$ & $b^{high_{2}}(\cdot)$ & $high_{3}$ & $b^{high_{3}}(\cdot)$ \\
\hline
\hline
$U_{1}$  &4     & 11.567 & 5     & 10.474 & 3     & 10.258 \\
    $U_{2}$  &7     & 13.971 & 8     & 13.33 & 9     & 12.842 \\
    $U_{3}$  &17    & 16.734 & 16    & 16.019 & 18    & 15.598 \\
    $U_{4}$  &17    & 12.042 & 18    & 11.055 & 16    & 9.766 \\
    $U_{5}$  &1     & 11.409 & 3     & 8.489 & 2     & 8.189 \\
    $U_{6}$  &14    & 9.421 & 13    & 8.734 & 10    & 7.523 \\
    $U_{7}$  &15    & 7.919 & 16    & 7.65  & 5     & 7.326 \\
    $U_{8}$  &14    & 7.416 & 13    & 6.939 & 15    & 6.837 \\
    $U_{9}$  &12    & 11.091 & 11    & 10.572 & 13    & 9.294 \\
    $U_{10}$  &19    & 34.575 & 18    & 17.199 & 17    & 14.982 \\
    $U_{11}$  &2     & 15.65 & 3     & 15.175 & 1     & 10.098 \\
    $U_{12}$  &19    & 15.236 & 18    & 14.737 & 17    & 12.956 \\
    $U_{13}$  &2     & 14.07 & 3     & 13.639 & 4     & 9.138 \\
    $U_{14}$  &2     & 14.957 & 3     & 10.974 & 4     & 9.822 \\
    $U_{15}$  &10    & 13.109 & 9     & 12.741 & 8     & 10.36 \\
    $U_{16}$  &1     & 60.842 & 2     & 17.604 & 3     & 7.103 \\
    $U_{17}$  &15    & 10.223 & 16    & 8.077 & 14    & 7.759 \\
    $U_{18}$  &8     & 10.324 & 6     & 9.509 & 10    & 9.473 \\
    $U_{19}$  &18    & 21.894 & 19    & 16.129 & 17    & 10.856 \\
\end{tabular}%
}
}
\end{center}
\end{table}

In some real world applications, the DM could wish a complete ranking of the alternatives under analysis. To summarize the results of the SMAA with a specific attention to the rank acceptability indices, \cite{KadzinskiMichalski2016} provided several aggregation procedures aiming to obtain a unique final ranking. In this case, we shall apply one of them in which each university $U$ will be associated to the following weighted mean of its rank acceptability indices: 

\begin{equation*}
E(U)=-\displaystyle\sum_{s=1}^{19}s\cdot b^{s}(U). 
\end{equation*}
 
\begin{table}[!h]
\begin{center}
\caption{Expected ranking of the universities at hand}\label{ExpectedRanking}
\resizebox{0.3\textwidth}{!}{
 \begin{tabular}{ccc}
    University & $E(\cdot)$ & Rank-position \\
		\hline
		\hline
    $U_{16}$    & -229.302 & 1 \\
    $U_{11}$    & -706.817 & 2\\
    $U_{14}$    & -707.028 & 3\\
    $U_{5}$     & -711.253 & 4\\
    $U_{13}$    & -755.027 & 5\\
    $U_{18}$    & -804.379 & 6\\
    $U_{1}$     & -815.172 & 7\\
    $U_{2}$     & -820.7 & 8\\
    $U_{9}$     & -902.591 & 9\\
    $U_{15}$    & -925.737 & 10\\
    $U_{7}$     & -954.926 & 11\\
    $U_{8}$     & -1014.15 & 12\\
    $U_{6}$     & -1051.94 & 13\\
    $U_{17}$    & -1127.69 & 14\\
    $U_{4}$     & -1322.53 & 15\\
    $U_{19}$    & -1455.68 & 16\\
    $U_{12}$    & -1472.89 & 17\\
    $U_{3}$     & -1554.66 & 18\\
    $U_{10}$    & -1667.53 & 19\\
    \end{tabular}%
}
\end{center}
\end{table}

Looking at the results in Table \ref{ExpectedRanking}, once again we have the proof that $U_{16}$ can be reasonably considered the best among the universities at hand, followed by $U_{11}$ and $U_{14}$, while $U_{10}$ and $U_{3}$ can be fairly considered the worst ones. A careful look at the rank acceptability indices in Table \ref{RAIResults} and to the most frequent positions in Table \ref{MF} would justify the second and third positions attained by $U_{11}$ and $U_{14}$ in place of $U_{5}$. Indeed, even if $U_{5}$ reaches the first position more frequently than the other two universities, $U_{11}$ and $U_{14}$ present high frequencies for the rank acceptability indices corresponding to the second and third positions, while $U_{5}$ has a frequency lower than 10\% for the same positions. Therefore, since the expected ranking takes into account all the frequencies with which an alternative reaches a certain position and not only the first one, $U_{11}$ and $U_{14}$ can be correctly considered better than $U_{5}$. 

\section{Conclusions}\label{Concl}
Parsimony of models in multiple criteria decision analysis has to be tempered by the possibility to permit a rich interaction with the decision maker (DM) and the consideration of robustness concerns. In this perspective we presented a decision model that has the following interesting properties:
\begin{itemize}
	\item interaction between criteria is taken into consideration,
	\item importance and interaction of criteria can change from one level to the other of the evaluations taken by considered alternatives on criteria at hand,
	\item the parameters of the model are obtained by starting from some preference information supplied by the decision makers in terms that are relatively easily understandable by them such as: 
		\begin{itemize}
		\item pairwise comparisons of alternatives, 
		\item comparisons of criteria in terms of their importance,
		\item nature of interaction between criteria (synergy or  redundancy) and comparisons related to intensity of the interaction between pairs of criteria;
			\end{itemize}
	\item consideration of the plurality of instances of the decision model compatible with the preferences provided by the DM so that, not only necessary preferences (valid for all compatible models) are distinguished from possible preferences (valid for at least one compatible model), but even a probabilistic evaluation of a preference of an alternative over another or the ranking position of the alternatives is given to the DM. 
\end{itemize}
All this is obtained by coupling: 
\begin{itemize}
	\item the level dependent Choquet integral, which permits to take into account importance and interaction of criteria that can change from one level to the other of the  evaluations of criteria,
	\item the robust ordinal regression and the stochastic multicriteria acceptability analysis, which permit to take into account the plurality of instances of the preference model compatible with the preference information supplied by the decision makers. 
\end{itemize}

We considered two possible representations of the changes of importance and interaction between criteria from one level to another of the evaluations:
\begin{itemize}
\item the model of interval level dependent capacity, in which the range of considered evaluations is split in a certain number of subintervals and the importance and interaction of criteria changes from one subinterval to the others, but remain the same in each subinterval,
\item the model of the piecewise linear level dependent capacities, in which the importance and the interaction of criteria change with continuity in the whole domain without the jumps that are present in the previous model.
\end{itemize}

Both models permit to apply Robust Ordinal Regression and Stochastic Multicriteria Acceptability Analsysis by solving LP problems and using the Hit and Run algorithm, which is a well known algorithm for sampling uniformly points in convex regions of the preference parameters space. 

We believe that the good properties of the methodology we have presented can be very useful in many real life complex decision problems in the main domains of interest such as sustainable development \cite{AngilellaEtAl2018}, wellbeing and happiness economics \cite{GrecoEtAl2019,greco2019sigma}, ranking of universities \cite{CorrenteGrecoSlowinski2019}, country competitiveness \cite{CorrenteEtAl2018}, and so on.

\section*{Acknowledgments}
The second and the third authors wish to acknowledge funding by the FIR of the University of Catania BCAEA3 ``New developments in Multiple Criteria Decision Aiding (MCDA) and their application to territorial competitiveness''

\section*{References}
\bibliographystyle{plain}
\bibliography{Full_bibliography}

\end{document}